\newtheorem{theorem}{Theorem}[section]
\newtheorem{lemma}[theorem]{Lemma}
\newtheorem{definition}[theorem]{Definition}
\theoremstyle{remark}
\newtheorem{remark}[theorem]{Remark}
\numberwithin{equation}{section}
\def\Cal{\mathcal}
\def\R{{\Cal R}}
\def\P{{\Cal P}}
\def\D{{\EuScript{D}}}
\def\S{{\Cal S}}
\def\F{{\Cal F}}
\def\I{{\Cal I}}
\def\W {{\EuScript W_\ell^{\,\a}}}
\def\Wb {{\EuScript W_\ell^{\,\b}}}
\def\sl{\sigma'_\ell vv'\sigma_\ell}
\def\slx{\sigma'_\ell \xi\xi'\sigma_\ell}
\def\s{\EuScript{S}}
\def\tr{{\hbox{\rm tr}}}
\def\Mml{\frM_{m,\ell}}
\def\G{\mathcal{G}}
\def\gnk{G_{n,k}}
\def\f0{f_0}
\def\Fc0{\varphi_0}
\def\rn{\bbr^n}
\def\I_k {I_{-}^{k/2}}
\def\I+k {I_{+}^{k/2}}
\def\vnm{V_{n,m}}
\def\bbr{{\Bbb R}}
\def\bbn{{\Bbb N}}
\def\bbc{{\Bbb C}}
\def\rank{{\hbox{\rm rank}}}
\def\supp{{\hbox{\rm supp}}}
\def\tr{{\hbox{\rm tr}}}
\def\det{{\hbox{\rm det}}}
\def\min{{\hbox{\rm min}}}
\def\Pr{{\hbox{\rm Pr}}}
\def\gnk{G_{n,k}}
\def\gnm{G_{n,m}}
\def\part{\partial}
\def\intl{\int\limits}
\def\b{\beta}
\def\Gam{\Gamma}
\def\a{\alpha}
\def\om{\omega}
\def\del{\delta}
\def\vp{\varphi}
\def\gam{\gamma}
\def\sig{\sigma}
\font\frak=eufm10
\def\fr#1{\hbox{\frak #1}}
\def\frM{\fr{M}}
\def\pl{\P_\ell}
\def\gla{\Gamma_\ell(\a)}
\def\glb{\Gamma_\ell(\b)}
\def\gk{\Gamma_k}
\def\det{{\hbox{\rm det}}}
\def\min{{\hbox{\rm min}}}
\def\p{\P_m}
\def\gm{\Gamma_m}
\def\tr{{\hbox{\rm tr}}}
\def\part{\partial}
\def\intl{\int\limits}
\def\b{\beta}
\def\Gam{\Gamma}
\def\a{\alpha}
\def\cpl{\overline\P_\ell}
\def\sideremark#1{\ifvmode\leavevmode\fi\vadjust{\vbox to0pt{\vss% the remark
 \hbox to 0pt{\hskip\hsize\hskip1em%                          will appear only
\vbox{\hsize2cm\tiny\raggedright\pretolerance10000%          on the side
 \noindent #1\hfill}\hss}\vbox to8pt{\vfil}\vss}}}%
\newcommand{\be}{\begin{equation}}
\newcommand{\ee}{\end{equation}}
\newcommand{\bea}{\begin{eqnarray}}
\newcommand{\eea}{\end{eqnarray}}
\newcommand{\Bea}{\begin{eqnarray*}}
\newcommand{\Eea}{\end{eqnarray*}}
\begin{document}

\title[  ]
{Erd\'{e}lyi-Kober integrals on the cone of positive definite
matrices and Radon transforms on Grassmann manifolds}

%    Information for first author
\author{ E. Ournycheva}
 \address{University of Pittsburgh at Bradford,
 300 Campus Drive, 16701, Bradford, PA, USA }
  \email{elo10@pitt.edu}

%\thanks{The authors were supported in part
%by the Edmund Landau Center for Research in Mathematical Analysis
%and Related Areas, sponsored by the Minerva Foundation (Germany).
%The first author was also supported by Abraham and Sarah Gelbart
%Research Institute for Mathematical Sciences. The second author was
%also supported by the  NSF grants  EPS-0346411 (Louisiana Board of
%Regents) and DMS-0556157).}

%    Information for second author
%\author{ B. Rubin}
% \address{Department of Mathematics, Louisiana State
%University,  Baton Rouge, LA, 70803, USA}
%\email{borisr@math.lsu.edu}
%\thanks{The second  author was supported in part
%by the Edmund Landau Center for Research in Mathematical Analysis
%and Related Areas, sponsored by the Minerva Foundation (Germany).}

\subjclass[2000]{44A12, 47G10}
%\date{April 9, 2005}

\keywords{Radon transforms, Grassmann manifolds, Stiefel manifolds,
 positive definite matrices, fractional integrals}

\begin{abstract}

We introduce  bi-parametric fractional integrals
 of the Erd\'{e}lyi-Kober type that   generalize
known G{\aa}rding-Gindikin
 constructions  associated to  the cone
of positive definite  matrices. It is proved  that the Radon
transform, which maps  a zonal function on the Grassmann manifold
$\gnm$ of $m$-dimensional linear subspaces of $\bbr^n$ into a function
on the similar manifold $\gnk$, $ 1\le  m<k\le n-1$, is represented
as analytic continuation of the corresponding Erd\'{e}lyi-Kober
integral. This result shows that different Grinberg-Rubin's formulas
for such transforms \cite{GR} have, in fact, a common structure.

 \end{abstract}

\maketitle

\section{Introduction}
\setcounter{equation}{0}

Radon transforms of different kinds have a long history  and
numerous  applications; see \cite{Eh}, \cite{GGG}, \cite{GGV},
\cite{H1}, \cite{Ru5}, and references therein.
 In the present paper, we focus  on important    connection
between Radon transforms on Grassmann manifolds   and  higher rank
fractional integrals. Let $\gnm $ and $\gnk$ be a pair of
Grassmann manifolds of $m$-dimensional and  $k$-dimensional linear
  subspaces  of $\bbr^n$, respectively; $1\le  m<k\le n-1$.
  We use the notation  $\tau_m$ and $\tau_k$ for the respective elements of these Grassmannians. The Radon
 transform of  a function $f$ on $\gnm$
  is a function $ \R f  $ on  $\gnk$
 defined by \be\label{R-GR} (\R f)(\tau_k)=\intl_{ \{
\tau_m : \tau_m \subset \tau_k \} } f(\tau_m)
 d\tau_m,
\ee  where $\tau_k \in \gnk$ and  $d \tau_m$ is  the relevant
probability measure. For $m=1$, a function $f$ on $G_{n,1}$ can be
identified with  an even function on the unit sphere $S^{n-1}
\subset \bbr^n$. In this case, $\R f$ represents the totally
geodesic transform that assigns   to a function $f$ on  the unit
sphere $S^{n-1}$ its integrals over the set of
$(k-1)$-dimensional totally geodesic
submanifolds of $S^{n-1}$. Different aspects of the  Radon transform (\ref{R-GR}) were
investigated  by Gel'fand  and collaborators \cite{GGR}, \cite{GGS},
  Grinberg \cite{Gr1}, Grinberg and Rubin \cite{GR},   Kakehi \cite{Ka1}, \cite{Ka},
  Petrov \cite{P1}, Zhang \cite{Zh},  and  others.

There  is a remarkable connection  between the Radon
transform (\ref{R-GR}) and the following G{\aa}rding-Gindikin
fractional integrals associated to the cone $\pl$ of positive
definite symmetric $\ell \times \ell$ matrices: \bea\label{GGp} (I_{+}^\a
f)(s)  &=&   \frac {1}{\gla} \intl_0^s f(r)\det(s  - r)^{\a-(\ell+1)/2}
dr,\\  \label{GGm}(I_{-}^\a f)(s)  &=& \frac {1}{\gla} \intl_s^{I_\ell}
f(r)\det(r -  s)^{\a-(\ell+1)/2} dr.\eea Here    $I_\ell$ is the identity $\ell\times \ell$
matrix, $$\int_0^s=\int_{\pl \cap (s -\pl)}, \qquad
 \int_s^ {I_\ell}=\int_{(s +\pl)\cap(I_\ell -\pl)},\qquad  s \in \pl;$$ $\gla$ is the  Siegel gamma
 function    (\ref{2.4}); see   \cite{Gar}, \cite{Gi}, \cite{OR2} for more details. For sufficiently good $f$, the integrals $I_{\pm}^\a f$ converge
absolutely if $Re \, \a> (\ell-1)/2$, and extend  to all $\a\in\bbc$ as entire functions of $\a$.

The following result from \cite{GR} is of our main concern. A
function $f$  on  $\gnm$ is canonically identified with right
$O(m)$-invariant function  on the Stiefel manifold $\vnm$ of
$n\times m$  real matrices $v$ satisfying $v'v=I_m$,  $v'$ being the
transpose of $v$.  Abusing notation, we write $f(\tau_m)=f(v)$.
 Fix an integer $\ell$ so that $ 1 \le \ell \le k -m$
and suppose that $f$ is $\ell$-zonal, i.e., $f(v) \equiv f_0(r)$,
  where $r=\sl$, $\sig_\ell=\left[\begin{array} {c} 0 \\ I_\ell   \end{array}
 \right] \in V_{n,\ell}$.
 It is proved in  \cite[Theorem 4.5]{GR}, that for   $ m \ge \ell,$  $\R f$   is represented
by the G{\aa}rding-Gindikin  integral  associated to $\pl$, namely,
 \be \label{R-GG}(\R f)(\tau_k)= c_1 \det(s)^{(\ell -k+1)/2} (I_+^{(k -m)/2}
 \tilde{f}_0)(s),\quad s=\sig'_\ell\Pr_{\tau_k} \sig_\ell, \ee
$$
\tilde f_0(r)=\det(r)^{(m-\ell-1)/2}
f_0(r), \qquad r \in \pl, \qquad c_1 =\frac{\Gam_\ell (k/2)}{\Gam_\ell (m/2)},
$$
$\Pr_{\tau_k}$ denotes the orthogonal projection on $\tau_k$.
 In the case  $m<\ell$,  when  rank$(r)<\ell$, the following formula was obtained in
 \cite[Theorem 4.5]{GR}:
\be \label{R-GG1} (\R f)(\tau_k)=c_2 \intl_0^{I_m} \det(I_m
-r)^{\del} \det( r)^\gam dr \intl_{V_{\ell, m}}
 f_0(s^{1/2} uru's^{1/2}) du, \ee
$$\gam =(\ell-m-1)/2, \quad
  \del=(k
 -m-\ell -1)/2, \quad c_2 =2^{-m}\pi^{-\ell m/2}\frac{\Gam_m (k/2)}{\Gam_m ((k
 -\ell)/2)}.$$

Our aim is to show that right hand sides of  (\ref{R-GG}) and
(\ref{R-GG1}) can be regarded as different forms
of the same  fractional integral, which is introduced below. The
latter resembles well known Erd\'{e}lyi-Kober operators in
fractional calculus \cite{SKM}.

{\bf Plan of the paper and main results.}  Section 2 contains
preliminaries. In Section 3, we  introduce  the following weighted
versions of the G{\aa}rding-Gindikin  integrals: \be \label{EK}
(J_{\pm}^{\a,\b} f)(s)=\frac{\det(s)^{d-\a-\b}}{\glb} (I_{\pm}^\a
g_\b)(s),  \ee $$ g_\b(r)=\det(r)^{\b-d} f(r),\qquad d=(\ell+1)/2.$$
For  $m=1$,    $J_{\pm}^{\a,\b} f$  coincide up to $1/\glb$ with the
classical Erd\'{e}lyi-Kober fractional integrals; see  \cite{SKM}.
 The newly introduced normalizing factor $1/\glb$ is needed
for analytic continuation of $J_{\pm}^{\a,\b} f$ in the
$\b$-variable.  We call (\ref{EK})  {\it fractional integrals of
the Erd\'{e}lyi-Kober type}.

If $f$ is good enough, then integrals  $J_{\pm}^{\a,\b} f$ converge absolutely
for $Re\, \a, Re\, \b>d-1$, and extend as entire functions of $\a$
and $\b$.  We obtain explicit representations  of
$J_{\pm}^{\a, m/2} f$, $m\in\bbn$, provided  that $Re\, \a>d-1$, see (\ref{2.22n}), (\ref{2.23n}). This allows
us to define $J_{\pm}^{\a,\b} f$  for $Re\,\a>d-1 $ and $\b$
 belonging to  the
 Wallach-like  set \cite{FK}  \be\label{Wal} \Wb
=\left\{0, \frac{1}{2}, 1, \frac{3}{2}, \ldots , \frac{\ell-1}{2}
\right\} \cup \left\{\b:\; Re\,\b> \frac{\ell-1}{2} \right \}\ee
 see Definitions \ref{EK+} and  \ref{EK-}.

  In Section 4, we establish connection between the Radon transform
of $\ell$-zonal   functions and integrals (\ref{EK}).

 \begin{theorem}\label{kuku1}
  Let $f$ be an integrable $\ell$-zonal function on $\vnm$, that is, $f(v) \equiv
  f_0(r)$, $r=\sl$.  If $ 1 \leq \ell \le k -m$,
then \be\label{kuku2} (\R f)(\tau_k)= \Gam_\ell (k/2)\,
(J_+^{\frac{k-m}{2},\, \frac{m}{2}} f_0)(s),
 \ee
where   $\tau_k\in G_{n,k}$,  $s=\sig'_\ell\Pr_{\tau_k} \sig_\ell$.
\end{theorem}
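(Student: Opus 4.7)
The plan is to match the unified identity (\ref{kuku2}) against the two Grinberg--Rubin formulas (\ref{R-GG}) and (\ref{R-GG1}) of \cite{GR}, which cover the complementary sub-ranges $m\ge \ell$ and $m<\ell$ inside the hypothesis $1\le \ell\le k-m$. In each sub-range I would simply unpack the definition (\ref{EK}) of $J_+^{\alpha,\beta}$ at $\alpha=(k-m)/2$, $\beta=m/2$ and verify that the resulting expression coincides with the right-hand side of the corresponding Grinberg--Rubin formula divided by $\Gamma_\ell(k/2)$.

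For $m\ge \ell$, the value $\beta=m/2$ belongs to the continuous part of the Wallach-like set (\ref{Wal}), so (\ref{EK}) applies directly. With $d=(\ell+1)/2$, the exponent of $\det(s)$ is $d-\alpha-\beta = (\ell-k+1)/2$, and the weight is $g_{m/2}(r) = \det(r)^{(m-\ell-1)/2} f_0(r) = \tilde f_0(r)$. Hence
\[
\Gamma_\ell(k/2)\,J_+^{(k-m)/2,\,m/2} f_0(s) = \frac{\Gamma_\ell(k/2)}{\Gamma_\ell(m/2)}\,\det(s)^{(\ell-k+1)/2}\,(I_+^{(k-m)/2}\tilde f_0)(s),
\]
which is precisely the right-hand side of (\ref{R-GG}).

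For $m<\ell$, the value $\beta=m/2$ lies in the discrete part of (\ref{Wal}), and $J_+^{\alpha,\b}f_0$ is then interpreted via the Stiefel-type representation (\ref{2.22n}) established in Section~3. Substituting that representation yields a double integral of the form
\[
\intl_0^{I_m}\det(I_m - r)^{\delta}\det(r)^{\gamma}\,dr \intl_{V_{\ell,m}} f_0(s^{1/2} u r u' s^{1/2})\, du
\]
with exponents $\gamma = (\ell-m-1)/2$ and $\delta=(k-m-\ell-1)/2$ matching those in (\ref{R-GG1}). It then remains to check the overall constant, which after multiplication by $\Gamma_\ell(k/2)$ must equal $c_2 = 2^{-m}\pi^{-\ell m/2}\Gamma_m(k/2)/\Gamma_m((k-\ell)/2)$; this reduces to a standard identity between Siegel gamma functions and the surface measure of $V_{\ell,m}$.

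The main obstacle is this second case: the first reduction is essentially bookkeeping, but in the second one must rely on the nontrivial representation (\ref{2.22n}) for $J_+^{\alpha,\beta}$ at discrete $\beta$ in (\ref{Wal}), and then combine several Siegel gamma factors, $\pi$-powers, and the Stiefel volume into exactly the constant $c_2$ appearing in (\ref{R-GG1}). Once the two constants are shown to agree, the unified formula (\ref{kuku2}) covers both sub-ranges simultaneously, revealing the common Erd\'elyi--Kober structure that the theorem asserts.
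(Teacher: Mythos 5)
Your proposal is correct, but it proves the theorem by a genuinely different route than the paper. You take the two Grinberg--Rubin formulas (\ref{R-GG}) and (\ref{R-GG1}) as known inputs from \cite{GR} and verify, separately on the sub-ranges $m\ge\ell$ and $m<\ell$, that their right-hand sides coincide with $\Gam_\ell(k/2)\,J_+^{(k-m)/2,\,m/2}f_0$: for $m\ge\ell$ this is the exponent/weight bookkeeping you carried out (indeed $d-\a-\b=(\ell-k+1)/2$ and $g_{m/2}=\tilde f_0$, so the match with (\ref{R-GG}) is immediate), and for $m<\ell$ it is the representation (\ref{J+gr}) at discrete $\b=m/2$ together with the Siegel gamma identity (\ref{2.5.2})/(\ref{gam-new}), which is exactly what makes the constants $\Gam_\ell(k/2)/\Gam_\ell(\tfrac{k-m}{2})$ and $\Gam_m(k/2)/\Gam_m(\tfrac{k-\ell}{2})$ agree. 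The paper instead gives a single self-contained computation valid uniformly in $m$: starting from the definition (\ref{rad}) of $\R f$ on Stiefel manifolds, it writes $g_\xi^{-1}\sig_\ell$ in polar form, applies the bi-Stiefel decomposition of Lemma \ref{l2.1} to the $V_{k,m}$-integral, changes variables $z=ys^{1/2}$, and recognizes the result as the Stiefel-type representation (\ref{2.22n}) of $J_+^{(k-m)/2,\,m/2}$; the Grinberg--Rubin formulas then come out as corollaries (this is precisely Remark \ref{rem-gr}). Your approach buys brevity and makes the ``unification'' of (\ref{R-GG}) and (\ref{R-GG1}) completely explicit, but it is logically dependent on \cite[Theorem 4.5]{GR} (and one should note the slightly different parametrization of $\tau_k$ used there, cf. the paper's remark following (\ref{rad})); the paper's computation is independent of \cite{GR} and in effect reproves those formulas. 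Both arguments rest on the same two nontrivial ingredients --- the discrete-$\b$ representation (\ref{2.22n}) and the gamma identity (\ref{2.5.2}) --- so the proposal is sound, though a complete write-up of your second case should actually display the polar-decomposition constants rather than assert that they ``reduce to a standard identity.''
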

Formula (\ref{kuku2}) obviously coincides with (\ref{R-GG})  in the
case $ m \ge \ell$.  As we shall see below (Remark \ref{rem-gr}), it also includes
(\ref{R-GG1}) in the case $ m <\ell$. An analogue  of  Theorem
\ref{kuku1} holds for the dual Radon transform, see Theorem
\ref{dr-ek}.

{\bf Acknowledgement.}  I am   grateful to Professor Boris Rubin for very
helpful comments and discussions.
The research was carried out in the framework of the project ``Higher-rank phenomena in integral
geometry" supported by  NSF grant DMS-0556157.

\section{Preliminaries}\label{sec2}

\subsection{Notation}\label{s2.1}

   Let  $\frM_{n,m}\sim\bbr^{nm}$ be  the
space of $n\times m$  real matrices $x=(x_{i,j})$  with the volume element  $dx=\prod^{n}_{i=1}\prod^{m}_{j=1} dx_{i,j}$. In the following,
$x'$  denotes   the transpose of  $x$, $I_m$
   is the identity $m \times m$
  matrix, and $0$ stands for zero entries. Given a square matrix $a$,  we denote by
  $|a|=\det(a)$
  the determinant of $a$,
  $\tr (a)$ stands for the trace of $a$.  We use  standard
 notations $O(n)$   and $SO(n)$ for the orthogonal group and the
 special orthogonal group of $\bbr^{n}$ with the  normalized
 invariant measure of total mass 1.

Let $\S_\ell \sim \bbr^{\ell(\ell+1)/2}$  be the space of $\ell \times \ell$ real
symmetric matrices $s=(s_{i,j})$
 with the volume element $ds=\prod_{i \le j} ds_{i,j}$.
We denote by  $\pl$   the cone of
positive definite matrices in $\S_\ell$; $\overline\pl$  is the closure of $\pl$, that is the set
of  all positive semi-definite $\ell \times \ell$ matrices.  For $r\in\pl$
($r\in\overline\pl$), we write $r>0$ ($r\geq 0$). Given
 $a, b\in S_\ell$, the inequality $a >b$  means $a - b \in
 \pl$,   the
symbol $\int_a^b f(s) ds$ denotes
 the integral over the set $(a +\pl)\cap (b -\pl)$.
 The group $G=GL(\ell,\bbr)$  of
 real non-singular $\ell \times \ell$ matrices $g$ acts transitively on $\pl$
  by the rule $r \to g rg'$.  The corresponding $G$-invariant
 measure on $\pl$ is \cite[p.
  18]{T} \be\label{2.1}
  d_{*} r = |r|^{-d} dr, \qquad |r|=\det (r), \qquad d= (\ell+1)/2. \ee The  Siegel gamma  function of $\pl$
 is defined by
\be\label{2.4}
 \gla=\intl_{\pl} \exp(-\tr (r)) |r|^{\a } d_*r
 =\pi^{\ell(
 \ell-1)/4}\prod\limits_{j=0}^{\ell-1} \Gam (\a- j/2), \ee
\cite{Gi}, \cite{FK},
 \cite{T}.
  The relevant beta function has the form \be\label{2.6}
  B_\ell (\a ,\b)=\intl_0^{I_\ell} |r|^{\a -d} |I_\ell-r|^{\beta -d} dr=
 \frac{\gla \glb}{\Gam_\ell (\a+\b)}. \ee
These integrals converge absolutely if and only if $Re
 \, \a, Re \, \b >d-1$. For
$1 \le k<\ell, \; k \in \bbn, $ the equality (\ref{2.4}) yields
 \be\label{2.5.2}
\frac{\gla}{\Gam_\ell(\a+k/2)}=\frac{\gk(\a+(k-\ell)/2)}{\gk(\a+k/2)}.\ee
For $r = (r_{i,j}) \in \pl$,  the differential operators  acting in
the $r$-variable are defined by
 \be\label{2.50} D_+ \equiv D_{+,\, r}=
 \det \left ( \eta_{i,j} \, \frac {\partial}{\partial
 r_{i,j}}\right ), \quad
  \eta_{i,j}= \left\{
 \begin{array} {ll} 1  & \mbox{if $i=j$}\\
1/2 & \mbox{if $i \neq j,$}
\end{array}
\right. \ee \be\label{2.51}   D_{-}\equiv D_{-,\, r}=(-1)^{\ell}
D_{+,\, r}.\ee

In the following, all function spaces on a subspace $S$ of $\S_\ell$ are identified with
the corresponding spaces on a  subspace of  $\bbr^{\ell(\ell+1)/2}$. For instance, $\s(S_\ell)$
denotes the Schwartz space    of infinitely differentiable rapidly
decreasing functions;  $\D(S)$ is the
space of functions $f \in C^\infty(S)$ with $\supp f \subset
  S$.

\subsection{Stiefel manifolds}\label{s2.3}

For $n\geq m$, let $\vnm= \{v \in \frM_{n,m}: v'v=I_m \}$
 be  the Stiefel manifold  of orthonormal $m$-frames in $\bbr^n$.
  The group $O(n)$
 acts transitively on $\vnm$  by the left matrix multiplication.
 This is also true for    $SO(n)$  if $n>m$.
We  fix an  invariant measure $dv$ on $\vnm$
  normalized by \be\label{2.16} \sigma_{n,m}
 \equiv \intl_{\vnm} dv = \frac {2^m \pi^{nm/2}} {\gm
 (n/2)}, \ee
\cite[p. 70]{Mu}, and denote  $d_\ast v=\sig^{-1}_{n,m} dv$. The following  statement can be found, e.g., in \cite{Herz},  \cite{Mu},
\cite{FK}.
\begin{lemma}\label{l2.3} {\rm (polar decomposition)} Let $x \in \frM_{n,m}, \; n \ge m$. If  $\rank (x)=
m$, then \be\label{pol} x=vr^{1/2}, \qquad v \in \vnm,   \qquad
r=x'x \in\p,\ee and $dx=2^{-m} |r|^{(n-m-1)/2} dr dv$.
\end{lemma}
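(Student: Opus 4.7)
The plan is to first establish existence and uniqueness of the polar decomposition, then compute the Jacobian in two moves: an invariance reduction, followed by one explicit Laplace/Gaussian identity.

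Existence is a direct matrix calculation. Since $\rank x=m$, the matrix $r:=x'x$ is positive definite (for nonzero $y\in\bbr^m$, $y'ry=|xy|^2>0$), so $r\in\p$. Let $r^{1/2}\in\p$ be its unique symmetric positive-definite square root and set $v:=xr^{-1/2}$. Then $v'v=r^{-1/2}x'xr^{-1/2}=I_m$, so $v\in\vnm$ and $x=vr^{1/2}$. Uniqueness is immediate from the identities $r=x'x$, $v=xr^{-1/2}$. Since $\{\rank x<m\}$ has Lebesgue measure zero, the map $\Phi:\vnm\times\p\to\Ma$, $(v,r)\mapsto vr^{1/2}$, is a smooth bijection between full-measure open subsets, and the Jacobian identity need only be proved there.

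For the Jacobian I would first use left $O(n)$-invariance to reduce the density to a function of $r$ alone. Observe that $\Phi(hv,r)=h\Phi(v,r)$ for every $h\in O(n)$; both $dx$ on $\Ma$ and $dv$ on $\vnm$ are $O(n)$-invariant; and $O(n)$ acts transitively on $\vnm$ when $n\ge m$. Hence $\Phi^{\ast}dx=\phi(r)\,dv\,dr$ for some density $\phi$ depending only on $r$.

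To identify $\phi$, I would test the identity against the Gaussian family $F_a(r)=e^{-\tr(ar)}$ for $a\in\p$. On the left, the linear substitution $y=xa^{1/2}$ (Jacobian $|a|^{n/2}$) yields $\intl_\Ma e^{-\tr(ax'x)}\,dx=\pi^{nm/2}|a|^{-n/2}$. On the right, rewriting $|r|^{(n-m-1)/2}dr=|r|^{n/2}d_{\ast}r$, applying the $GL$-invariant substitution $r=a^{-1/2}sa^{-1/2}$, and invoking the Siegel integral (\ref{2.4}) together with the normalization (\ref{2.16}) of $\sig_{n,m}$ produces exactly the same value $\pi^{nm/2}|a|^{-n/2}$ precisely when $\phi(r)=2^{-m}|r|^{(n-m-1)/2}$. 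Injectivity of the Laplace transform on $\p$ then forces this choice, giving the asserted formula $dx=2^{-m}|r|^{(n-m-1)/2}\,dr\,dv$.

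The main obstacle is the invariance reduction: to make it rigorous, one must work in local charts for $\vnm$ and verify that the chain-rule Jacobian of $\Phi$ is compatible with the $O(n)$-equivariance, leaning on transitivity of the $O(n)$-action on $\vnm$. Once this factorization $\Phi^{\ast}dx=\phi(r)\,dv\,dr$ is secured, the Laplace-transform matching that pins down $\phi$ is an entirely routine computation built from the Siegel gamma function (\ref{2.4}) and the normalization (\ref{2.16}).
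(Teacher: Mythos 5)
Your proposal is correct, but it is not the argument the paper points to: the paper states the lemma without proof, citing Herz, Muirhead and Faraut--Kor\'anyi, where the Jacobian $2^{-m}|r|^{(n-m-1)/2}$ is obtained by a direct exterior-form computation (differentiating $x=vr^{1/2}$ and taking wedge products of the entries of $dx$, which simultaneously produces the invariant volume form on $\vnm$). Your route is genuinely different: you first observe that $\Phi(v,r)=vr^{1/2}$ is an equivariant diffeomorphism between full-measure sets, so that by left $O(n)$-invariance of $dx$ and $dv$ and transitivity of $O(n)$ on $\vnm$ the pulled-back measure must factor as $\phi(r)\,dv\,dr$; you then pin down $\phi$ by matching Gaussian integrals, using $\intl_{\Ma}e^{-\tr(ax'x)}dx=\pi^{nm/2}|a|^{-n/2}$ against $2^{-m}\sig_{n,m}\Gam_m(n/2)|a|^{-n/2}$ via the Siegel integral (\ref{2.4}) and the normalization (\ref{2.16}), and invoke injectivity of the Laplace transform on $\P_m$ (the analogue of Lemma \ref{lap}, whose integrability hypothesis is satisfied here) to conclude $\phi(r)=2^{-m}|r|^{(n-m-1)/2}$ a.e. The computations check out (the substitutions $y=xa^{1/2}$ with Jacobian $|a|^{n/2}$ and $r=a^{-1/2}sa^{-1/2}$ with $d_*r=d_*s$ are correct), and the equivariance factorization, which you rightly flag as the step needing care, is standard since the density of $\Phi^*dx$ is smooth on the regular set and constant along the transitive $O(n)$-orbits in the $v$-variable. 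What the two approaches buy: the classical wedge-product proof is self-contained, identifies the invariant measure $dv$ concretely, and gives the density pointwise; your argument is softer, avoids exterior calculus entirely, and stays within the paper's own toolkit (Siegel gamma function, Laplace-transform uniqueness), at the cost of determining the density only almost everywhere --- which is all the integration formula requires --- and of presupposing the normalization (\ref{2.16}) of the invariant measure rather than deriving it.
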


\begin{lemma}\label{l2.1} {\rm (bi-Stiefel decomposition).} \ Let $k$,
$m$, and $n$  be positive integers satisfying $1 \le k,\, m   \le n-1$,
 $k+ m \le n.$
 Almost all matrices $v \in \vnm$ can be
represented in the form
 \be\label{herz1}
  v= \left[\begin{array} {cc} a \\ u(I_m -a'a)^{1/2}
 \end{array} \right], \qquad a\in \frM_{k, m}, \quad u \in V_{n-k,m},
 \ee
  so that
   \be\label{2.10}
   \intl_{\vnm} f(v) dv
 =\intl_{0<a'a<I_m}d\mu(a)
 \intl_{ V_{n- k, m}} f\left(\left[\begin{array} {cc} a \\ u(I_m -a'a)^{1/2} \end{array}
 \right]\right) \, du,
\ee
 $$
 d\mu(a)=|I_m-a'a|^\delta da,\quad \del=(n-k-m-1)/2.
 $$

\end{lemma}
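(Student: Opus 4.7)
\medskip

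\noindent\textbf{Proof proposal.}

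My plan is to establish the pointwise representation (\ref{herz1}) by elementary manipulation of the Stiefel constraint, and then to derive (\ref{2.10}) by computing the flat Lebesgue measure on $\frM_{n,m}$ in two different ways and equating. For the pointwise step, split any $v\in\vnm$ into its top $k$ rows $a\in\frM_{k,m}$ and bottom $n-k$ rows $b\in\frM_{n-k,m}$. The Stiefel identity $v'v=I_m$ is equivalent to $a'a+b'b=I_m$, so $b'b=I_m-a'a$. The hypothesis $k+m\le n$ gives $n-k\ge m$, so $b$ can have full column rank; on the open dense subset of $\vnm$ where $a'a<I_m$ strictly, $b$ has rank $m$ and the polar decomposition of Lemma \ref{l2.3} provides a unique factorization $b=u(I_m-a'a)^{1/2}$ with $u\in V_{n-k,m}$, which is (\ref{herz1}).

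For the Jacobian identity I would integrate an arbitrary test function of the form $F(x)=\phi(x'x)\,f\bigl(x(x'x)^{-1/2}\bigr)$ over $\frM_{n,m}$ in two ways. Direct application of Lemma \ref{l2.3} to $x$ yields
\[
\int_{\frM_{n,m}}F(x)\,dx=2^{-m}\int_{\vnm}f(v)\,dv\int_{\P_m}\phi(\rho)\,|\rho|^{(n-m-1)/2}\,d\rho.
\]
For the second evaluation, split $x$ into top $k$ rows $y$ and bottom $n-k$ rows $z$, so $dx=dy\,dz$, and apply Lemma \ref{l2.3} to $z$ alone: $z=u\sigma^{1/2}$, $\sigma=z'z\in\P_m$, $u\in V_{n-k,m}$, and $dz=2^{-m}|\sigma|^{(n-k-m-1)/2}d\sigma\,du$. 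Next change variables $(y,\sigma)\mapsto(a,\rho)$ via $\rho=y'y+\sigma$ and $y=a\rho^{1/2}$; the inverse gives $\sigma=\rho^{1/2}(I_m-a'a)\rho^{1/2}$. Under this substitution the matrix $v(x)=x(x'x)^{-1/2}$ has top block $a$ and bottom block $u(I_m-a'a)^{1/2}$, matching (\ref{herz1}) exactly.

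The Jacobian of $(y,\sigma)\mapsto(a,\rho)$ factors cleanly: with $y$ fixed, the translation $\sigma\mapsto\rho=\sigma+y'y$ is volume preserving; with $\rho$ fixed, $a\mapsto y=a\rho^{1/2}$ is right multiplication on each of the $k$ rows of $a$ by $\rho^{1/2}$, with total Jacobian $|\rho|^{k/2}$. Combined with the identity $|\sigma|=|\rho|\,|I_m-a'a|$, this yields
\[
dx=2^{-m}|\rho|^{(n-m-1)/2}|I_m-a'a|^{\delta}\,d\rho\,da\,du,\qquad\delta=(n-k-m-1)/2,
\]
after the exponents of $|\rho|$ collapse via $(n-k-m-1)/2+k/2=(n-m-1)/2$. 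Equating the two expressions for $\int F\,dx$ and using that $\phi$ is arbitrary cancels the common $\rho$-integral and produces (\ref{2.10}). The main obstacle is this Jacobian bookkeeping: the power of $|\rho|$ coming from the right-multiplication step together with the hidden one inside $|\sigma|^{(n-k-m-1)/2}$ must combine to precisely the exponent produced by the global polar decomposition, which is exactly what allows the residual $\rho$-dependence to drop out. Matrices with $\det(I_m-a'a)=0$ form a null set and are excluded from the ``almost all'' assertion.
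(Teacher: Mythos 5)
The paper does not actually prove this lemma; it only cites Herz and Grinberg--Rubin, so your argument has to stand on its own, and its skeleton does: splitting $v$ into top rows $a$ and bottom rows $b$, using $b'b=I_m-a'a$ and the polar decomposition of Lemma \ref{l2.3} (legitimate since $n-k\ge m$) gives (\ref{herz1}) off the null set where $I_m-a'a$ is singular; and integrating $F(x)=\phi(x'x)\,f(x(x'x)^{-1/2})$ over $\frM_{n,m}$ in two ways, once by the global polar decomposition and once by splitting $x$ into $y,z$, applying polar decomposition to $z$ and substituting $\rho=y'y+\sigma$, $y=a\rho^{1/2}$, is a sound route. The Jacobian bookkeeping is correct ($dy=|\rho|^{k/2}da$, $|\sigma|=|\rho|\,|I_m-a'a|$, $(n-k-m-1)/2+k/2=(n-m-1)/2$), the factors $2^{-m}$ match on both sides, and the total-mass check $f\equiv 1$ confirms the normalization.

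There is, however, one step that is false as written. After your substitution the bottom block of $v(x)=x(x'x)^{-1/2}$ is $u\sigma^{1/2}\rho^{-1/2}$, and since $\sigma=\rho^{1/2}(I_m-a'a)\rho^{1/2}$, the matrix $w=\sigma^{1/2}\rho^{-1/2}$ satisfies $w'w=I_m-a'a$ but is in general \emph{not} equal to $(I_m-a'a)^{1/2}$ (it need not even be symmetric unless $\rho$ and $I_m-a'a$ commute). What you really obtain is top block $a$ and bottom block $u\,\theta\,(I_m-a'a)^{1/2}$ with $\theta=\theta(a,\rho)=w(I_m-a'a)^{-1/2}\in O(m)$, so the claim of matching (\ref{herz1}) ``exactly'' fails, and since $f$ in the lemma is an arbitrary integrable function (not right $O(m)$-invariant), the discrepancy cannot be ignored pointwise. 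The repair is short but must be stated: for fixed $(a,\rho)$ the measure $du$ on $V_{n-k,m}$ is invariant under the right action $u\mapsto u\theta$ of $O(m)$ (its pushforward is again left $O(n-k)$-invariant with the same total mass, hence equal by uniqueness), so the inner $u$-integral with bottom block $u\theta(I_m-a'a)^{1/2}$ coincides with the one with bottom block $u(I_m-a'a)^{1/2}$; after that, cancelling the common $\rho$-integral yields (\ref{2.10}). With this insertion your proof is complete, up to the harmless remark that the domain $0<a'a<I_m$ in (\ref{2.10}) should be read as $a'a<I_m$ (the lower bound is automatic a.e. when $k\ge m$ and vacuous when $k<m$).
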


For m=1, this  is a well known  bispherical decomposition \cite[pp.
12, 22]{VK2}.
In the higher rank case,  this statement is due to  \cite[p. 495]{Herz} for   $k=m$
 and to Grinberg and Rubin  \cite{GR} for all $k+m \leq n$, see also \cite{OlR},
  \cite{Zh}.

\subsection{The Laplace transform}
  Let $z=\sigma+i\om$, $\sigma \in \pl$,
$\om\in \S_\ell$,  be a complex symmetric
matrix.  Suppose that  $f$ is  a locally integrable function on
$\S_\ell$ satisfying
 $f(r)=0$ if $r\notin \cpl$, and $\exp(-\tr(\sigma_0
r))f(r)\in L^1(\S_\ell)$ for some $\sigma_0\in\pl$. The integral
\be\label{3.11}
 (Lf)(z)=\intl_{\pl} \exp(-\tr(zr))f(r)dr
\ee is called the Laplace transform  of $f$.
This integral is  absolutely convergent in the (generalized) half-plane
$Re\;z>\sigma_0$.
 Let \be\label{fts}
(\F g)(\om)=\intl_{\S_\ell} \exp(\tr(i\om s)) g (s) ds,\qquad
\om\in\S_\ell, \ee
 be the Fourier transform  of  a function $g$  on $\S_\ell$. Then
$
(Lf)(z)=(\F g_\sigma)(-\om),
$
where $g_\sigma(r)=\exp(-\tr(\sigma  r))f(r)\in L^1(\S_\ell)$ for
$\sigma >\sigma_0$. Thus, all properties of the Laplace transform
are obtained from the general Fourier transform theory for
Euclidean spaces  \cite{Herz}, \cite[ p. 126]{Vl}.
 The following uniqueness result for the
 Laplace transform  follows from injectivity of
the Fourier transform of tempered distributions.
\begin{lemma}\label{lap}
  If $f_1 (r)$ and $ f_2 (r)$ satisfy $ \exp(-\tr(\sigma_0
r))f_j(r)\in L^1(\P_\ell), \quad j=1,2, $
 for some $ \sigma_0 \in \pl$,  and $(Lf_1)(z)=(Lf_2)(z)$ whenever
$Re\;z>\sigma_0$, then $f_1(r)= f_2(r)$  almost everywhere on $
\S_\ell$.
\end{lemma}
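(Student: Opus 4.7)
The plan is to reduce the claim to the classical $L^1$-uniqueness of the Fourier transform on $\bbr^{\ell(\ell+1)/2}$, using exactly the Laplace--Fourier identification $(Lf)(z)=(\F g_\sigma)(-\om)$ recorded in the paragraph immediately preceding the lemma.

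First I would set $f=f_1-f_2$, extended by zero to all of $\S_\ell$. By linearity of the Laplace transform, $\exp(-\tr(\sig_0 r))f(r)\in L^1(\S_\ell)$ and $(Lf)(z)=0$ for every $z$ with $Re\,z>\sig_0$. It therefore suffices to show $f=0$ almost everywhere on $\S_\ell$.

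Next I would fix an arbitrary $\sig\in\pl$ with $\sig>\sig_0$ and set $g_\sig(r)=\exp(-\tr(\sig r))f(r)$, extended by zero off $\overline{\pl}$. Since $\sig-\sig_0\in\pl$, we have $\tr((\sig-\sig_0)r)\geq 0$ for every $r\in\pl$, and hence the pointwise majorization $|g_\sig(r)|\leq \exp(-\tr(\sig_0 r))|f(r)|$ shows that $g_\sig\in L^1(\S_\ell)$. Writing $z=\sig+i\om$ with $\om\in\S_\ell$, the hypothesis translates directly into
\[
(\F g_\sig)(-\om)=(Lf)(\sig+i\om)=0 \qquad \text{for every } \om\in\S_\ell.
\]

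Since $g_\sig\in L^1(\S_\ell)\cong L^1(\bbr^{\ell(\ell+1)/2})$ has identically vanishing Euclidean Fourier transform, the standard injectivity of the Fourier transform on $L^1$ forces $g_\sig=0$ almost everywhere on $\S_\ell$. Because $\exp(-\tr(\sig r))>0$ on $\pl$, we conclude $f=0$ a.e.\ on $\pl$, and by the zero extension $f=0$ a.e.\ on $\S_\ell$; that is, $f_1=f_2$ a.e., as required. No substantive obstacle arises: the only mild technical points are the pointwise majorization that places $g_\sig$ in $L^1(\S_\ell)$, and the invocation of the classical Fourier uniqueness theorem (equivalently, the injectivity of the Fourier transform on tempered distributions specialized to the integrable function $g_\sig$, as the paper itself suggests).
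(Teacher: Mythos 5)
Your argument is correct and is essentially the paper's own: the paper proves the lemma by the same Laplace--Fourier identification $(Lf)(\sigma+i\om)=(\F g_\sigma)(-\om)$ and then invokes injectivity of the Fourier transform (stated there for tempered distributions, which in your $L^1$ setting is the classical uniqueness theorem you use). Your added detail --- the majorization $|g_\sigma(r)|\le \exp(-\tr(\sigma_0 r))|f(r)|$ placing $g_\sigma$ in $L^1(\S_\ell)$ for a fixed $\sigma>\sigma_0$ --- is exactly the point the paper leaves implicit, so there is no gap.
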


\subsection{G{\aa}rding-Gindikin distributions}
Let  $f$ belong  to the Schwartz space $\s(\S
_\ell)$.  The  G{\aa}rding-Gindikin
distribution associated to the cone $\pl$ is defined by   \be\label{gdd}
\G_\a (f)=\frac {1}{\gla} \intl_{\pl} f(r)|r|^{\a-d} dr,\qquad
d=(\ell+1)/2. \ee   The integral (\ref{gdd}) converges absolutely for $Re\,\a >
d-1$ and
 admits  analytic
continuation as an entire function of $\a$ so that $
\G_0(f) =f(0)$.  The integrals of half-integral order  have the form  \be\label{er} \G_{m/2}
(f)=\pi^{-\ell m/2}\intl_{\Mml} f(\om'\om)d\om, \quad m=1,2,\dots, \ee
see \cite[pp. 132--134]{FK}, \cite{OR3}.

 \subsection{The G{\aa}rding-Gindikin fractional integrals}
Let
 $Q=\{r \in \pl: 0<r<I_\ell \}$     be the ``unit interval'' in $\pl$.
 For $f\in L^1(Q)$ and $Re \, \a > d-1$, the G{\aa}rding-Gindikin integrals are defined by
\bea\label{3.1} (I_{+}^\a f)(s)  &=&   \frac {1}{\gla} \intl_0^s
f(r)|s  -  r|^{\a-d} dr,\\  \label{3.2}(I_{-}^\a f)(s)  &=& \frac
{1}{\gla} \intl_s^{I_\ell}   f(r)|r -  s|^{\a-d} dr,\eea where $s \in
Q$.  Both integrals are absolutely convergent.
 For $f\in\D(Q)$ and
$Re \, \a \le d-1$,
 the analytic continuation of the integrals (\ref{3.1}) and (\ref{3.2}) can be defined by   \be\label{brr} (I_{\pm}^\a
f)(s)=(I_{\pm}^{\a+j}D_{\pm}^j f)(s) \quad \text{\rm if} \quad
d-1-j<Re \, \a \le d-j; \quad j=1,2, \ldots\; ,\ee where $D_{\pm}$ are differential operators (\ref{2.50}), (\ref{2.51}).

\begin{lemma} For $f\in\D(Q)$ and $\a\in\bbc$,
\be\label{3.7}(I_{\pm}^{\a}f)(s)=(I_{\mp}^{\a}g)(I_\ell-s),\qquad g(r)=
f(I_\ell-r).
 \ee
\end{lemma}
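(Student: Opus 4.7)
The plan is to verify the identity first for $\mathrm{Re}\,\a>d-1$ by a direct change of variables in the absolutely convergent integrals, and then extend to arbitrary $\a\in\bbc$ by analytic continuation.

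For the direct computation, I would start from
$$
(I_+^\a f)(s)=\frac{1}{\gla}\intl_{\pl\cap(s-\pl)} f(r)\,|s-r|^{\a-d}\,dr,
$$
and apply the linear substitution $r\mapsto I_\ell-r$, whose Jacobian on $\S_\ell$ is $\pm 1$, so $dr$ is preserved. The constraint $0<r<s$ translates into $I_\ell-s<r<I_\ell$, i.e., the new integration region is exactly the one defining $I_-^{\a}$ at the point $I_\ell-s$. Moreover $|s-r|$ becomes $|r-(I_\ell-s)|$, and $f(r)$ becomes $g(r)$ by the very definition of $g$. Collecting all pieces produces $(I_-^\a g)(I_\ell-s)$. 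The companion identity $(I_-^\a f)(s)=(I_+^\a g)(I_\ell-s)$ follows by the same argument, or by applying the first identity with $f$ replaced by $g$ (since the involution $r\mapsto I_\ell-r$ is self-inverse and $Q$ is invariant under it).

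To pass from $\mathrm{Re}\,\a>d-1$ to all $\a\in\bbc$, I would invoke the fact, stated just before the lemma, that for $f\in\D(Q)$ both $(I_+^\a f)(s)$ and $(I_-^\a f)(s)$ extend to entire functions of $\a$. Fix $s\in Q$; then the map $\a\mapsto (I_+^\a f)(s)-(I_-^\a g)(I_\ell-s)$ is entire and vanishes on the half-plane $\mathrm{Re}\,\a>d-1$, hence vanishes identically by the uniqueness theorem for holomorphic functions of one variable. This is the cleanest route and avoids any computation with the differential operators $D_\pm$.

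If one prefers to argue without appealing to the identity theorem, the alternative is to use the recurrence (\ref{brr}) directly: choose $j$ with $d-1-j<\mathrm{Re}\,\a\le d-j$, write $(I_+^\a f)(s)=(I_+^{\a+j}D_+^j f)(s)$, apply the already-proved identity in the convergent range to the function $D_+^j f\in\D(Q)$, and then verify the compatibility $(D_+^j f)(I_\ell - \cdot)=D_-^j g$ by noting that the substitution $r\mapsto I_\ell-r$ sends each $\partial/\partial r_{i,j}$ to its negative, so the $\ell$-th order operator $D_+$ acquires the factor $(-1)^\ell$ which, by (\ref{2.51}), converts $D_+$ into $D_-$. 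The single mildly subtle point in this second route is this sign bookkeeping; the analytic-continuation argument sidesteps it entirely and is, in my view, the main step to emphasize.
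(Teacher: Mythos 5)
Your proposal is correct and follows the paper's own argument exactly: the change of variables $r\mapsto I_\ell-r$ in the absolutely convergent range $\mathrm{Re}\,\a>d-1$, followed by analytic continuation in $\a$ using that both sides are entire. The alternative route via the operators $D_\pm$ that you sketch is consistent with the sign convention (\ref{2.51}) but is not needed.
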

\begin{proof}
Since the integrals $(I_{+}^\a f)(s)$ and $ (I_{-}^\a f)(s)$  are
entire functions of $\a$, it suffices to prove (\ref{3.7}) for $Re
\, \a  >  d-1$. This can be easily done by changing variables $r\to
I_\ell-r$.
\end{proof}

\begin{theorem}\label{th-GGl}
If $f\in\D(Q)$, then for all $m \in \bbn$, \bea
(I_{+}^{m/2}f)(s) \label{2.22}&=& \pi^{-\ell
m/2}\intl_{\{\om\in\Mml:\;\om'\om<s\}}
f(s-\om'\om)d\om,\\(I_{-}^{m/2} f)(s) \label{2.23} & =&\pi^{-\ell
m/2}\intl_{\{\om\in\Mml:\;\om'\om<I_\ell-s\}} f(s+\om'\om)d\om .\eea
Moreover, \be\label{2.32}  (I_{\pm}^{0} f)(s)=f(s). \ee
\end{theorem}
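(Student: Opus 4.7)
The plan is to use uniqueness of the Laplace transform (Lemma \ref{lap}). First I would reduce (\ref{2.23}) to (\ref{2.22}) via the symmetry (\ref{3.7}): setting $g(r)=f(I_\ell-r)$ and applying (\ref{2.22}) to $g$ at $I_\ell-s$ turns $\om'\om<s$ into $\om'\om<I_\ell-s$ and $g(s-\om'\om)$ into $f(s+\om'\om)$. So I focus on (\ref{2.22}) and (\ref{2.32}).

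My strategy for (\ref{2.22}) is to show that both sides have the same Laplace transform in $s$. On the left, in the strip $Re\,\a>d-1$, the substitution $t=s-r$ combined with Fubini gives
\be\label{plan1}
(LI_{+}^\a f)(z)=\frac{1}{\gla}\intl_{\pl}\intl_{\pl}e^{-\tr(z(r+t))}f(r)|t|^{\a-d}\,dt\,dr=|z|^{-\a}\,(Lf)(z),
\ee
the inner $t$-integral being $\gla|z|^{-\a}$ after the change $t\to z^{-1/2}tz^{-1/2}$ in (\ref{2.4}). Both sides of (\ref{plan1}) are entire in $\a$: for the left-hand side I would use the recursive definition (\ref{brr}) together with the standard identity $(LD_{+}h)(z)=|z|\,(Lh)(z)$, which holds for $h\in\D(Q)$ by integration by parts on $\pl$, the boundary terms being killed by the compact support. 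Hence (\ref{plan1}) persists for all $\a\in\bbc$, in particular for $\a=m/2$ and $\a=0$.

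For the right-hand side $R(s)$ of (\ref{2.22}), I would substitute $u=s-\om'\om$ (under which $u\in\pl$ and $\om\in\Mml$ vary independently and the constraint $\om'\om<s$ disappears) and apply Fubini:
\[
(LR)(z)=\pi^{-\ell m/2}\,(Lf)(z)\intl_{\Mml}e^{-\tr(z\om'\om)}\,d\om.
\]
Factorising the matrix Gaussian across the $m$ rows of $\om$, each of the $m$ row-wise $\ell$-dimensional Gaussians contributes $\pi^{\ell/2}|z|^{-1/2}$, so the whole integral equals $\pi^{\ell m/2}|z|^{-m/2}$ and $(LR)(z)=|z|^{-m/2}(Lf)(z)$. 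Combining with the $\a=m/2$ case of (\ref{plan1}) and invoking Lemma \ref{lap}, I conclude that $I_{+}^{m/2}f$ agrees with $R$ almost everywhere on $\S_\ell$, hence everywhere by continuity. Setting $\a=0$ in (\ref{plan1}) gives $(LI_{+}^{0}f)(z)=(Lf)(z)$ and therefore $I_{+}^{0}f=f$; the $I_{-}^{0}$ case follows by the same reduction via (\ref{3.7}).

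The main technical obstacle, I expect, will be confirming that $(LI_{+}^\a f)(z)$ is entire in $\a$, which in turn rests on the identity $(LD_{+}h)(z)=|z|\,(Lh)(z)$. Because $D_{+}$ is the determinant of a matrix of first-order partial derivatives in the entries $r_{i,j}$ of a symmetric matrix, the factors $\eta_{i,j}$ from (\ref{2.50}) together with the symmetry constraint $r_{i,j}=r_{j,i}$ have to be tracked carefully through an integration-by-parts computation on $\pl$; once this identity is in place, the rest of the argument is routine.
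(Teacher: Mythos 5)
Your argument is correct, and it is worth noting how it sits relative to the paper: the paper's own proof of this theorem is essentially a citation --- formula (\ref{2.22}) and $(I_{+}^{0}f)(s)=f(s)$ are quoted from \cite{OR3}, and only the reduction of the right-sided cases via (\ref{3.7}) is carried out in the text (exactly as you do it). What you supply is a complete, self-contained proof of the left-sided statements by the Laplace transform method: the identity $(LI_{+}^{\a}f)(z)=\det(z)^{-\a}(Lf)(z)$, its persistence for all $\a\in\bbc$ through (\ref{brr}) and the integration-by-parts identity $(LD_{+}h)(z)=\det(z)(Lh)(z)$, the Gaussian evaluation $\int_{\Mml}\exp(-\tr(z\om'\om))\,d\om=\pi^{\ell m/2}\det(z)^{-m/2}$, and Lemma \ref{lap}. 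This is precisely the technique the paper deploys later to prove the analogous formula (\ref{2.22n}) for $J_{+}^{\a,m/2}$ (where (\ref{3.9.1}) is again quoted from \cite{OR3}), so your route is consistent with the paper's methodology while filling in the step it outsources. Two small points you already flag and handle correctly: the evaluation of $\int_{\pl}\exp(-\tr(zt))|t|^{\a-d}\,dt$ and of the Gaussian should be done for real $z=\sigma\in\pl$ and extended to $Re\,z>0$ by analyticity; and the signs in the integration by parts for $D_{+}$ work out because $D_{+}$ expands into products of $\ell$ first-order derivatives, each contributing a factor $-1$, which cancels the $(-1)^{\ell}$ arising from $D_{+}\exp(-\tr(zr))=(-1)^{\ell}\det(z)\exp(-\tr(zr))$; the weights $\eta_{i,j}=1/2$ for $i\neq j$ exactly compensate the factor $2$ coming from $\tr(zr)=\sum_i z_{ii}r_{ii}+2\sum_{i<j}z_{ij}r_{ij}$. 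The final passage from equality almost everywhere to equality on $Q$ by continuity is legitimate since both sides are continuous for $f\in\D(Q)$ and $Re\,\a>d-1$.
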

\begin{proof}

Formulas (\ref{2.22}) and $(I_{+}^{0} f)(s)=f(s)$ are  verified  in
\cite{OR3} for arbitrarily $s\in\pl$; (\ref{2.23}) and the second
equality in (\ref{2.32})  follow from (\ref{3.7}) and the
corresponding properties of the integral $I_+ ^\a f$.
\end{proof}

\begin{theorem}\label{l3.4} If $f \in L^1(Q)$ and $m\in \bbn$, then $(I_{\pm}^{m/2}
f)(s)$ converge   absolutely  for almost all $s \in Q$.
\end{theorem}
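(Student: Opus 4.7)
The plan is to use the explicit representations (\ref{2.22}) and (\ref{2.23}) for $(I_\pm^{m/2} f)(s)$, reducing the question to a Fubini computation on a bounded subset of $\Mml$. I will prove the claim for $I_+^{m/2}$ first; the statement for $I_-^{m/2}$ will then follow from the reflection formula (\ref{3.7}), which converts the one into the other by the measure-preserving change $r\mapsto I_\ell-r$ and therefore transports the integrability statement without change.

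By (\ref{2.22}) applied to $|f|$, it suffices to show that
\[
J:=\intl_Q ds\intl_{\{\om\in\Mml:\,\om'\om<s\}}|f(s-\om'\om)|\,d\om<\infty,
\]
since finiteness of this integral implies finiteness of the inner integrand (and hence of $(I_+^{m/2}f)(s)$) for almost every $s\in Q$. Because the integrand is nonnegative, Tonelli's theorem allows swapping the order of integration. The constraints $0<s<I_\ell$ and $0<\om'\om<s$ combine to say $\om'\om<I_\ell$ and $\om'\om<s<I_\ell$, so interchange yields
\[
J=\intl_{\{\om\in\Mml:\,\om'\om<I_\ell\}}d\om\intl_{\{s:\,\om'\om<s<I_\ell\}}|f(s-\om'\om)|\,ds.
\]

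Now make the substitution $r=s-\om'\om$ in the inner integral, a translation on $\S_\ell$ whose Jacobian is $1$. The new variable $r$ ranges over $\{r:0<r<I_\ell-\om'\om\}\subset Q$, so the inner integral is bounded by $\|f\|_{L^1(Q)}$. Thus
\[
J\leq \|f\|_{L^1(Q)}\cdot\vol\{\om\in\Mml:\om'\om<I_\ell\},
\]
and the remaining volume is finite: the condition $\om'\om<I_\ell$ forces every column of $\om$ to have Euclidean norm less than $1$, so the set is contained in a bounded box in $\Mml\sim\bbr^{\ell m}$. Hence $J<\infty$, proving the absolute convergence of $(I_+^{m/2}f)(s)$ for almost every $s\in Q$.

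There is no real obstacle here; the main point is simply to justify the Fubini interchange (immediate since we pass to $|f|$ and a nonnegative integrand) and to notice that after the translation $r=s-\om'\om$ the two variables separate into an $L^1$-norm of $f$ times a bounded Lebesgue volume. The case $I_-^{m/2}$ is then immediate from (\ref{3.7}): setting $g(r)=f(I_\ell-r)$, one has $g\in L^1(Q)$ with the same norm as $f$, so the argument above gives absolute convergence of $(I_+^{m/2}g)(I_\ell-s)=(I_-^{m/2}f)(s)$ for almost every $s\in Q$.
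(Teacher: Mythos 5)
Your proof is correct and follows essentially the same route as the paper: the paper reduces the claim to the inequality $\int_0^{a}(I_{+}^{m/2}f)(s)\,ds\le \const\cdot|a|^{m/2}\int_0^{a}f(r)\,dr$ cited from [OR2], which is exactly the Tonelli computation you carry out explicitly (with a cruder but sufficient bound), and then handles $I_{-}^{m/2}$ via (\ref{3.7}) just as you do. The only difference is that you make self-contained a step the paper delegates to a reference.
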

\begin{proof}
It was shown in \cite{OR3} that for nonnegative $f$ and  every  $a \in \pl$,
\be \int_0^{a} (I_{+}^{m/2}f)(s)ds
  \le \frac{\Gam_\ell(d)}{\Gam_\ell(m/2+d)}\; |a|^{m/2}  \intl_0^{a} f(r) dr\, . \nonumber \ee
 This proves that  $(I_{+}^{m/2}
f)(s)$ is    absolutely convergent   for almost all $s \in Q$ provided that  $f \in L^1(Q)$. The statement for the right-sided integral is a consequence of  (\ref{3.7}).
\end{proof}

According to Theorems \ref{th-GGl}  and \ref{l3.4}, the G{\aa}rding-Gindikin fractional integrals can be defined  for arbitrary
integrable functions $f$  and  $\a$ belonging  to the
Wallach  set \be\label{wal} \W =\left\{0,
\frac{1}{2}, 1, \frac{3}{2}, \ldots , \frac{\ell-1}{2} \right\} \cup
\left\{\a:\; Re\,\a> \frac{\ell-1}{2} \right \}.\ee

\section{  The generalized Erd\'{e}lyi-Kober fractional integrals}

Let $f$ be  a function defined  on the   ``unit interval'' $\bar Q=\{r \in
\cpl: 0\leq r\leq I_\ell \}$ in $\cpl$. For  $Re \, \a,\; Re \, \b
> d-1$, we introduce  the generalized Erd\'{e}lyi-Kober fractional integrals    \bea\label{EKp}
(J_{+}^{\a,\b} f)(s) & =& \frac {|s|^{d-\a-\b}}{\gla\glb} \intl_0^s
f(r)|r|^{\b-d}|s  - r|^{\a-d} dr,\\\label{EKm} (J_{-}^{\a,\b} f)(s)
&=& \frac {|s|^{d-\a-\b}}{\gla\glb} \intl_s^{I_\ell} f(r)|r|^{\b-d}|r  -  s|^{\a-d}
dr, \eea  where $s\in Q$,  $ d=(\ell+1)/2$.  Since  \be\label{EK-GG}(J_{\pm}^{\a,\b}
f)(s)=\frac{|s|^{d-\a-\b}}{\glb} (I_{\pm}^\a g_\b)(s),\qquad g_\b(r)=|r|^{\b-d}
f(r), \ee
  these integrals
 converge   absolutely  for almost all $s \in Q$ whenever $g_\b\in L^1 (Q)$.
Suppose that  for fixed  $\b$ satisfying   $Re \, \b
> d-1$,  the function  $g_\b$ is infinitely differentiable and
 supported in $Q$. Then the integrals   $J_{\pm}^{\a,\b} f$
admit  analytic continuation as entire functions of $\a$ so that
$(J_{\pm}^{0,\b} f)(s)=f(s)/\glb$.

Our next  goal is to show that  integrals (\ref{EKp}), (\ref{EKm})  extend  analytically as entire functions of $\b$ and   obtain   an explicit form  of
$J_{\pm}^{\a, m/2} f$ for  $m\in\bbn$
provided that $Re \, \a> d-1$.

 \begin{theorem}
 Let    $f$ be an   infinitely differentiable function
   supported in   $Q$. The integrals  $J_{\pm}^{\a,\b} f$ are entire functions of $\a$ and $\b$.  For all $m \in \bbn$,
and $Re \, \a> d-1$, \bea (J_{+}^{\a,\,m/2} f)(s) \label{2.22n}&=&
\frac {\pi^{-\ell
m/2} |s|^{d-\a-\b}}{\gla}\!\!\!\!\!\!\!\!\!\!\intl_{\{\om\in\Mml:\;\om'\om<s\}}\!\!\!\!\!\!\!\!\!\!|s
- \om'\om|^{\a-d} f(\om'\om)d\om, \\
 (J_{-}^{\a,\,m/2} f)(s) \label{2.23n} &=&\frac {\pi^{-\ell
m/2}|s|^{d-\a-\b}}{\gla}\!\!\!\!\!\!\!\!\!\!\intl_{\{\om\in\Mml:\;\om'\om<I_\ell-s\}}\!\!\!\!\!\!\!\!\!\!|s
+ \om'\om|^{\a-d} f(\om'\om)d\om .\eea
\end{theorem}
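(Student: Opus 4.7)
The plan is to exploit the factorization (\ref{EK-GG}) and handle the two parameters $\a$ and $\b$ separately, then evaluate explicitly at $\b=m/2$ via the polar decomposition.

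\textbf{Entireness.} Fix $\b$ with $Re\,\b>d-1$. Then $g_\b(r)=|r|^{\b-d}f(r)$ lies in $\D(Q)$, so by (\ref{brr}) the integral $(I_\pm^\a g_\b)(s)$ extends to an entire function of $\a$. Since $|s|^{d-\a-\b}/\glb$ is also entire in $\a$, the factorization (\ref{EK-GG}) shows that $(J_\pm^{\a,\b}f)(s)$ is entire in $\a$. Symmetrically, for fixed $\a$ with $Re\,\a>d-1$ rewrite
$$
\gla\,|s|^{\a+\b-d}\,(J_+^{\a,\b}f)(s)\;=\;\G_\b(h_s^\a),\qquad h_s^\a(r)=f(r)\,|s-r|^{\a-d}\,\chi_{\{0<r<s\}},
$$
so that the dependence on $\b$ enters only through the G{\aa}rding-Gindikin distribution applied to a compactly supported function. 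Standard analytic continuation of $\G_\b$---via integration by parts against $D_{+,r}$, using that $f$ vanishes near $\d\pl$---extends $\G_\b(h_s^\a)$ to all $\b\in\bbc$. Joint entireness then follows from Hartogs' theorem.

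\textbf{Formula (\ref{2.22n}) for $m\ge\ell$.} In this range both sides converge absolutely at $\b=m/2$. Apply Lemma \ref{l2.3} (with the lemma's $(n,m)$ replaced by $(m,\ell)$) to the $\om$-integral on the right: write $\om=v\,r^{1/2}$ with $v\in V_{m,\ell}$, $r=\om'\om\in\pl$, and $d\om=2^{-\ell}|r|^{(m-\ell-1)/2}\,dr\,dv$. The right-hand side of (\ref{2.22n}) then becomes
$$
\frac{\pi^{-\ell m/2}|s|^{d-\a-m/2}}{\gla}\cdot\sig_{m,\ell}\,2^{-\ell}\intl_0^s f(r)\,|r|^{m/2-d}\,|s-r|^{\a-d}\,dr,
$$
which, using $\sig_{m,\ell}=2^\ell\pi^{m\ell/2}/\Gam_\ell(m/2)$ from (\ref{2.16}) and $(m-\ell-1)/2=m/2-d$, reduces exactly to the definition (\ref{EKp}) of $(J_+^{\a,m/2}f)(s)$.

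\textbf{Formula (\ref{2.22n}) for $1\le m<\ell$, and (\ref{2.23n}).} For $m<\ell$ the left-hand side of (\ref{2.22n}) is defined only via analytic continuation in $\b$ to $\b=m/2$. By (\ref{er}) the right-hand side equals $|s|^{d-\a-m/2}\G_{m/2}(h_s^\a)/\gla$, which is precisely the value at $\b=m/2$ of the entire extension constructed in the first step; hence both sides agree. The companion formula (\ref{2.23n}) follows from (\ref{2.22n}) via the reflection identity (\ref{3.7}). The main technical obstacle is justifying (\ref{er}) on the non-smooth cutoff $h_s^\a$ and verifying that the analytic continuation in $\b$ of $\G_\b(h_s^\a)$ coincides with the explicit $\om$-integral at $\b=m/2$; this amounts to extending (\ref{er}) beyond $\s(\S_\ell)$, either by approximation with Schwartz functions or by a direct integration-by-parts argument with $D_{+,r}$.
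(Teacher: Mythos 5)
Your route is genuinely different from the paper's. The paper proves (\ref{2.22n}) by computing the Laplace transform of both sides (using $(LI_+^{\a}f)(z)=\det(z)^{-\a}(Lf)(z)$, the entire continuation of the G{\aa}rding--Gindikin distribution $\G_\b(F_z)$ with $F_z(r)=e^{-\tr(zr)}f(r)\in\s(\S_\ell)$, and the uniqueness Lemma \ref{lap}); you instead verify the identity directly by polar decomposition (Lemma \ref{l2.3}) when $m\ge\ell$ and fall back on $\G_\b$ when $m<\ell$. Your $m\ge\ell$ computation is correct and arguably more elementary than the Laplace argument. Moreover, the ``main technical obstacle'' you flag in the case $m<\ell$ is not actually an obstacle under the theorem's hypotheses: since $f$ is supported in the open set $Q\subset\pl$, it vanishes in a neighborhood of the singular matrices, so for $m<\ell$ the $\om$-integral on the right of (\ref{2.22n}) is identically zero (every $\om'\om$ with $\om\in\Mml$ is rank-deficient), while on the left $\int h_s^{\a}(r)|r|^{\b-d}\,dr$ stays finite and entire in $\b$ and $1/\Gam_\ell(m/2)=0$ for $m<\ell$, so the continuation also vanishes. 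No extension of (\ref{er}) beyond $\s(\S_\ell)$ is needed here; that extension is only required later, for Definition \ref{EK+} and Theorem \ref{th-ex}, which the paper handles by a separate Fubini argument. (Your Hartogs step for joint entireness is stated too quickly --- the two separate continuations only cover a cross-shaped domain --- but the paper is equally brief on this point.)

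The one genuine gap is the last sentence: (\ref{2.23n}) does not follow from (\ref{2.22n}) ``via the reflection identity (\ref{3.7})''. The reflection $r\mapsto I_\ell-r$ turns $g_{m/2}(r)=|r|^{m/2-d}f(r)$ into $|I_\ell-r|^{m/2-d}f(I_\ell-r)$, whose weight is no longer of the form $|r|^{m/2-d}(\cdot)$, so the resulting expression is not a $J_+^{\a,\,m/2}$ of anything and (\ref{2.22n}) cannot be applied to it. Indeed, if you run your own polar-decomposition argument on (\ref{EKm}) at $\b=m/2$, $m\ge\ell$, you obtain
$$
(J_-^{\a,\,m/2}f)(s)=\frac{\pi^{-\ell m/2}|s|^{d-\a-m/2}}{\gla}\intl_{\{\om\in\Mml:\;s<\om'\om<I_\ell\}}|\om'\om-s|^{\a-d}f(\om'\om)\,d\om,
$$
which has a different domain and kernel from the displayed (\ref{2.23n}); the two do not agree for general $f$. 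So the right-sided case needs its own derivation (and the stated form of (\ref{2.23n}) should be re-examined). The paper's proof makes the same one-line appeal to (\ref{3.7}) here, so you are no worse off, but as written this step is not justified.
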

\begin{proof}
 We  apply the Laplace transform technique  to prove (\ref{2.22n}). One can assume that $f$ is a function  on $\S_\ell$ which is identically  zero outside $Q$, so that   $f\in \s(\S
_\ell)$.  Denote \be\label{Jt}(\tilde J_{+}^{\a,\,\b} f)(s)=|s|^{\a+\b-d}(J_{+}^{\a,\,\b} f)(s).\qquad  s\in \pl\ee  It is known (see, e.g., \cite{OR3}) that for $f\in\D(\pl)$, $s\in \pl$,  and $\a\in\bbc$,
 \be\label{3.9.1}
(LI_{+}^{\a}f)(z)=\det(z)^{-\a}(Lf)(z), \qquad Re\;z>0. \ee
Here,  $\det(z)^{-\a}=\exp(-\a \log\det(z))$, where  the
branch of $\log\det(z)$ is chosen so that  $\det(z)=|\sig|$ for
real $z=\sigma\in\p$. Hence, for $Re\,\b>d-1$ and $Re\;z>0$,
\be\nonumber
(L\tilde J_{+}^{\a,\b}f)(z)=\frac{\det(z)^{-\a}}{\glb}(Lg_\b)(z)= \frac{\det(z)^{-\a}}{\glb} \intl_{\pl} |r|^{\b-d} \exp(-\tr(zr)) f(r)dr. \ee
Denote $F_{z}(r)= \exp(-\tr(zr))f(r)$. Then
$
(L\tilde J_{+}^{\a,\b}f)(z)=\det(z)^{-\a}\G_\b (F_{z}),
$
where $\G_\b (F_{z})$ is the  G{\aa}rding-Gindikin distribution (\ref{gdd}), which is entire function of $\b$. By (\ref{er}), for $m=1,2,\dots$, we obtain
\be
(L\tilde J_{+}^{\a, m/2}f)(z)=\pi^{-\ell m/2} \det(z)^{-\a} \intl_{\Mml} \exp(-\tr(z\om'\om)) f(\om'\om)d\om.
\ee
On the other hand, the Laplace transform of the integral
$$I(s)=\frac {\pi^{-\ell
m/2} }{\gla}\!\!\!\!\!\!\!\!\!\!\intl_{\{\om\in\Mml:\;\om'\om<s\}}\!\!\!\!\!\!\!\!\!\!|s
- \om'\om|^{\a-d} f(\om'\om)d\om$$
can  be evaluated as follows
\bea\nonumber
(LI)(z)&=&\frac {\pi^{-\ell
m/2} }{\gla} \intl_{\Mml} f(\om'\om)d\om \intl_{ \om'\om}^\infty \exp(-\tr(z s))|s
- \om'\om|^{\a-d}  ds\\\nonumber &=&\frac {\pi^{-\ell
m/2} }{\gla} \intl_{\Mml} \exp(-\tr(z \om'\om)) f(\om'\om)d\om \intl_{ \pl} \exp(-\tr(z s))|s|^{\a-d}  ds.
\eea
By  the  the well known  formula
$$\intl_{\pl} \exp(-\tr(zr))|r|^{\a-d} dr=\gla
\det(z)^{-\a},\qquad Re\;\a>d-1, $$ see, e.g., \cite[p. 479]{Herz}, we obtain
$$(LI)(z)=\pi^{-\ell m/2} \det(z)^{-\a} \intl_{\Mml} \exp(-\tr(z\om'\om)) f(\om'\om)d\om.$$
According to the uniqueness property of the Laplace transform, it follows that
$$
(\tilde J_{+}^{\a, m/2}f)(s)=\frac {\pi^{-\ell
m/2} }{\gla}\!\!\!\!\!\!\!\!\!\!\intl_{\{\om\in\Mml:\;\om'\om<s\}}\!\!\!\!\!\!\!\!\!\!|s
- \om'\om|^{\a-d} f(\om'\om)d\om.
$$
This proves (\ref{2.22n}). The statement for the right-sided integral  is a consequence of
 (\ref{EK-GG}),  (\ref{3.7}), and  (\ref{2.22n}).

\end{proof}

\begin{theorem}\label{th-ex} Let  a function $f$ on $\bar Q$ satisfy  the
inequality  \be\label{EK-ex}
\intl_{\{\om\in\frM{m,\ell}:\,\om'\om<I_\ell\}} |f(\om'\om)| d\om
<\infty, \qquad   m \in\bbn. \ee
    Then the integrals  $(J_{\pm}^{\a,\,m/2}
f)(s)$ converge  absolutely  for $ Re\;\a
>d-1$ and almost all $s\in Q$.
\end{theorem}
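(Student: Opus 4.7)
My plan is to apply a Tonelli argument to the integrated absolute value of $(J_\pm^{\a,\,m/2}f)(s)$ over $s\in Q$, using the explicit representations (\ref{2.22n}) and (\ref{2.23n}) already available in the range $Re\,\a>d-1$. The prefactor $|s|^{d-\a-m/2}$ is a finite positive scalar for each $s\in Q$ outside the measure-zero boundary set $\{|s|=0\}$, so absolute convergence of $(J_+^{\a,\,m/2}f)(s)$ is equivalent to finiteness of
$$
\Phi_+(s)=\intl_{\{\om\in\Mml:\,\om'\om<s\}}|s-\om'\om|^{Re\,\a-d}\,|f(\om'\om)|\,d\om
$$
at almost every $s\in Q$; the analogous quantity $\Phi_-(s)$ is extracted from (\ref{2.23n}).

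I would integrate $\Phi_+$ over $s\in Q$ and exchange the order of integration by Tonelli (the integrand being non-negative), which yields
$$
\intl_{Q}\Phi_+(s)\,ds=\intl_{\{\om:\,\om'\om<I_\ell\}}|f(\om'\om)|\left[\intl_{\om'\om<s<I_\ell}|s-\om'\om|^{Re\,\a-d}\,ds\right]d\om.
$$
The substitution $t=s-\om'\om$ converts the inner integral into $\intl_{0<t<I_\ell-\om'\om}|t|^{Re\,\a-d}\,dt$, which is bounded above by $\intl_0^{I_\ell}|t|^{Re\,\a-d}\,dt=B_\ell(Re\,\a,d)$. The latter is finite by (\ref{2.6}) precisely because $Re\,\a>d-1$, and combining this uniform bound with the hypothesis (\ref{EK-ex}) gives $\intl_Q\Phi_+(s)\,ds<\infty$, hence $\Phi_+(s)<\infty$ for almost every $s\in Q$.

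The minus case runs identically with the substitution $t=s+\om'\om$: the inner $s$-integral becomes $\intl_{\{t:\,\om'\om<t<I_\ell\}}|t|^{Re\,\a-d}\,dt$, whose domain sits inside $\{0<t<I_\ell\}$ because $\om'\om\geq 0$, so the same bound $B_\ell(Re\,\a,d)$ applies and (\ref{EK-ex}) closes the argument. I do not anticipate a real obstacle here: this is essentially a Fubini/Tonelli swap coupled with the Siegel beta integral (\ref{2.6}). The only point requiring minor care is verifying that the domain of the transformed inner integral is contained in the full simplex $\{0<t<I_\ell\}$, which for both signs is immediate from $\om'\om\geq 0$ together with the constraint $\om'\om<I_\ell$ already built into (\ref{EK-ex}).
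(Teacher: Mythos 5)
Your argument is correct and is essentially the paper's own proof: both integrate the nonnegative integrand over $s\in Q$, swap the order of integration by Tonelli, and control the inner integral $\int_0^{I_\ell-\om'\om}|t|^{Re\,\a-d}\,dt$ by the Siegel beta integral $B_\ell(Re\,\a,d)$, which is finite precisely for $Re\,\a>d-1$. The only cosmetic difference is that the paper evaluates that inner integral exactly as $B_\ell(\a,d)\,|I_\ell-\om'\om|^{\a}$ and then bounds $|I_\ell-\om'\om|^{\a}\le 1$, while you bound it by enlarging the domain of integration; these are equivalent.
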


\begin{proof}  It suffices to show that the
integrals
 $ \int_0^{I_\ell}(\tilde J_{\pm}^{\a,\,m/2} f)(s) ds$ are  finite for
 nonnegative
  $f $, where $\tilde J_{\pm}^{\a,\,m/2} f$ are defined by (\ref{Jt}).  For the left-sided integral,  changing the order of integration yields
 \bea \int_0^{I_\ell} (\tilde J_{+}^{\a,\,m/2}
f)(s) ds &=&\frac {\pi^{-\ell m/2}}{\gla}
  \intl_{\om'\om<I_\ell} f(\om'\om)d\om\intl_{\om'\om}^{I_\ell}  |s-\om'\om|^{\a-d} ds
  \nonumber \\ &=&\frac {\pi^{-\ell m/2}}{\gla}
  \intl_{\om'\om<I_\ell} f(\om'\om) d\om\intl_0^{I_\ell-\om'\om} |r|^{\a-d} dr  \nonumber \\
  &=&\frac {\pi^{-\ell m/2}\Gam_\ell (d)}{\Gam_\ell(\a+d)} \intl_{\om'\om<I_\ell} |I_\ell-\om'\om|^{\a}f(\om'\om) d\om
  \nonumber\\
&\leq &\frac {\pi^{-\ell m/2}\Gam_\ell (d)}{\Gam_\ell (\a+d)}
\intl_{\om'\om<I_\ell} f(\om'\om) d\om <\infty, \nonumber \eea
as required. Here,
we used the substitution $r= a^{1/2}sa^{1/2}$, $dr=|a|^{d}ds$ \cite[
pp. 57--59]
 {Mu}, to evaluate the integral
 $$ \intl_0^{a} |r|^{\a-d} dr =|a|^{\a}\intl_0^{I_\ell } |s|^{\a-d} ds=B_\ell(\a,d)|a|^{\a},\qquad a=I_\ell-\om'\om.$$
 The proof  for $J_{-}^{\a,\,m/2} f$ is the same.

\end{proof}

\begin{remark}
For $m\geq \ell$, condition (\ref{EK-ex}) is equivalent to $|r|^{m/2-d}f(r)\in L^1(Q)$.

\end{remark}
Theorem \ref{th-ex}  allows us to define integrals
$J_{\pm}^{\a,\,\b}f$ for $Re\a>d-1$ and all $\b$ belonging to the Wallach  set (\ref{Wal}) provided that   $|r|^{\b-d}
f(r)\in L^1(Q)$ when $Re \, \b > d-1$,  and $f$ satisfies (\ref{EK-ex})
when  $\b=m/2$, $m=1,2,\dots \ell-1$.

\begin{definition}\label{EK+}
The left-sided   Erd\'{e}lyi-Kober type  fractional integral is defined by

 \be\nonumber (J_{+}^{\a,\b} f)(s) = \left \{
\begin{array} {ll}\displaystyle{\frac {|s|^{d-\a-\b}}{\gla\glb} \intl_0^s
f(r)|r|^{\b-d}|s  - r|^{\a-d} dr}
 & \mbox{ if   $Re \, \b > d-1$}, \\
{} \\
 \displaystyle{\frac {\pi^{-\ell
m/2} |s|^{d-\a-\b}}{\gla}\!\!\!\!\!\!\!\!\intl_{\{\om\in\Mml:\;\om'\om<s\}}\!\!\!\!\!\!\!\!|s
- \om'\om|^{\a-d} f(\om'\om)d\om} & \mbox { if   $\b=m/2$}. \\
\end{array}
\right.
 \ee
 \end{definition}

 \begin{remark}
 For $0<m<\ell$, the second  integral in Definition \ref{EK+}  can be rewritten as follows:
 \be\label{J+gr}
 (J_{+}^{\a, m/2} f)(s)=c \intl_0^{I_m} |q|^{(\ell-m-1)/2} |I_m-q|^{\a-d}\; dq\intl_{V_{\ell,m}} f(s^{1/2}uqu's^{1/2} ) \; du,
 \ee
 $$
 c= 2^{-m} \pi^{-\ell
m/2}/ \gla. $$

 \end{remark}

 \begin{definition}\label{EK-}
The right-sided Erd\'{e}lyi-Kober type fractional integral is defined as follows
\be\nonumber (J_{-}^{\a,\b} f)(s) = \left \{
\begin{array} {ll}\displaystyle{\frac {|s|^{d-\a-\b}}{\gla\glb} \intl_s^{I_\ell} f(r)|r|^{\b-d}|r  -  s|^{\a-d}
dr}
 & \mbox{ if   $Re \, \b > d-1$}, \\
{} \\
 \displaystyle{\frac {\pi^{-\ell
m/2} |s|^{d-\a-\b}}{\gla}\!\!\!\!\!\!\!\!\intl_{\{\om\in\Mml:\;\om'\om<I_\ell-s\}}\!\!\!\!\!\!\!\!|s
+ \om'\om|^{\a-d} f(\om'\om)d\om} & \mbox { if   $\b=m/2$}. \\
\end{array}
\right.
 \ee
\end{definition}

%\begin{remark} For $\ell=1,2,\dots m-1$, the
%integrals (\ref{2.22n}), (\ref{2.23n})  involve  values of   $f$ on
%the boundary $\partial\p$.
%\end{remark}

\section {The Radon transform on Grassmannians}

\subsection{Definitions}

Let  $1\leq m<k\leq n-1$.  Given  a pair of Grassmann manifolds $\gnm$ and   $\gnk$,  the Radon transform of a  function $f(\tau_m)$ on $\gnm$ is defined  by (\ref{R-GR}). The corresponding
dual   transform of    a function $\vp(\tau_k)$ on $\gnk$ is
 \be\label{R-d} (\R^* \vp)(\tau_m)=\intl_{ \{
\tau_k : \tau_m \subset \tau_k \} } \vp(\tau_k)
 d\tau_k, \qquad \tau_m \in \gnm.
\ee
Let $\tau_m=\{x\in\rn:\, \zeta' x =0,   \; \zeta \in V_{n,n-m} \},$
$\tau_k=\{x\in\rn:\, \xi' x =0,   \; \xi \in V_{n,n-k} \}.$
Denote by $v= \zeta^\perp$  the orthogonal complement of
$\zeta$.  To  give  precise meanings to  integrals  (\ref{R-GR}), (\ref{R-d}), we use the following parameterizations:
\be \nonumber \tau_m\equiv\tau_m(v),\; v \in V_{n,m};\qquad
\tau_k\equiv\tau_k(\xi),\; \xi \in V_{n,n-k}.\ee The functions  $f(\tau_m)$ on $\gnm$ and $\varphi(\tau_k)$ on $\gnk$
are  identified  with the right-invariant functions on the Stiefel
manifolds $\vnm$ and $V_{n,n-k}$, respectively.
Condition $\tau_m\subset\tau_k$ is equivalent to $\xi'v=0$. The set
of all $v$ satisfying the last equation is represented as $v=g_\xi
 \left[\begin{array} {c} \om \\ 0   \end{array} \right]$,
 $\om\in V_{k,m}$,
where   $g_\xi$ is an arbitrary rotation with the
 property \be\label{gxi} g_\xi  \xi_0 =\xi, \quad \xi_0= \left[\begin{array} {c}  0 \\  I_{n-k}
\end{array} \right] \in V_{n,n-k}. \ee This observation leads to the following.

 \begin{definition}
 The Radon transform of a right-invariant function $f(v)$ on $\vnm$ is defined
by the formula
 \be \label{rad}(\R f)(\xi)=\intl_{V_{k,m}} f \left ( g_\xi
 \left[\begin{array} {c} \om \\ 0   \end{array} \right]\right) \,
 d_*\om, \qquad \xi \in V_{n,n-k}. \ee
 \end{definition}

 \begin{remark}
 Definition (\ref{rad}) of the Radon transform  differs from  that in \cite{GR} by the parametrization of the plane $\tau_k$.
 Specifically,  in \cite{GR},
  $(\R f)(\tau_k)\equiv (\R f)(\xi^\perp) $.
 \end{remark}

 Let  $\gam_v$ be   an arbitrary rotation with the
 property \be \label{gamv}\gam_v  v_0 =v, \quad v_0= \left[\begin{array} {c}  0 \\  I_{m}
\end{array} \right] \in \vnm. \ee The set of all $\xi$ that obey $\xi' v=0$ has the form   $\xi=\gam_v
 \left[\begin{array} {c} u \\ 0   \end{array} \right], \quad u\in V_{n-m,n-k} $.
\begin{definition}
The   dual Radon transform of a right-invariant
function $\vp(\xi)$ on $V_{n, n-k}$  is defined by  \be
\label{dual-rad}(\R^* \vp)(v)=\intl_{V_{n-m,n-k}} \vp \left ( \gam_v
 \left[\begin{array} {c} u \\ 0   \end{array} \right]\right) \,
 d_*u,\qquad v\in\vnm. \ee
 \end{definition}

 The expressions   (\ref{rad}) and (\ref{dual-rad}) are independent of the choice of
rotations $ g_\xi : \xi_0 \to \xi$, and $\gam_v : v_0 \to v$,
respectively.

\begin{lemma}
The duality relation

 \be\label{4.3}
\intl_{\vnm} f(v) (\R^*\vp)(v) d_*v=
\intl_{V_{n, n-k}} (\R f)(\xi) \vp(\xi) d_*\xi
 \ee
is valid  provided that either side of this equality is finite for
$f$ and $\vp$ replaced by $|f|$ and $|\vp|$, respectively.
\end{lemma}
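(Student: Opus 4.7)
The plan is to realize both sides of (\ref{4.3}) as a single integral over the incidence variety
$$
\Lambda=\{(v,\xi)\in\vnm\times V_{n,n-k}:\,v'\xi=0\},
$$
taken against the unique $O(n)$-invariant probability measure $d\mu$ on $\Lambda$. The diagonal action $\gam\cdot(v,\xi)=(\gam v,\gam\xi)$ is transitive on $\Lambda$, since any orthonormal $m$-frame together with an orthonormal $(n-k)$-frame in its orthogonal complement can be rotated to a fixed base configuration; consequently $\Lambda$ is a compact homogeneous $O(n)$-space and carries a unique invariant probability measure.

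First I would substitute the definition (\ref{dual-rad}) of $\R^*\vp$ into the LHS and apply Fubini (legitimate by the absolute-convergence hypothesis), giving
$$
\text{LHS}=\intl_{\vnm}d_*v\intl_{V_{n-m,n-k}}f(v)\vp(\gam_v[u;0])\,d_*u.
$$
For fixed $v$, the mapping $u\mapsto\gam_v[u;0]$ is a measure-preserving bijection from $(V_{n-m,n-k},d_*u)$ onto the fiber $\{\xi\in V_{n,n-k}:\xi'v=0\}$, viewed as the Stiefel manifold of orthonormal $(n-k)$-frames in $v^\perp\cong\bbr^{n-m}$ with its own normalized invariant measure; this identification is insensitive to the choice of section $v\mapsto\gam_v$ because any two choices differ by a right factor in the stabilizer of $v_0$ under which $d_*u$ is invariant. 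The product measure $d_*v\,d_*u$ consequently descends to an $O(n)$-invariant probability measure on $\Lambda$ which, by uniqueness, must equal $d\mu$. Hence $\text{LHS}=\int_\Lambda f(v)\vp(\xi)\,d\mu(v,\xi)$.

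A symmetric computation on the RHS, based on (\ref{rad}) and the complementary fibration $\Lambda\to V_{n,n-k}$ whose fibers are copies of $V_{k,m}$ sitting inside $\xi^\perp\cong\bbr^k$, produces $\text{RHS}=\int_\Lambda f(v)\vp(\xi)\,d\mu(v,\xi)$, and (\ref{4.3}) follows at once. The only delicate point is the measure-theoretic verification that the iterated measures $d_*v\,d_*u$ and $d_*\xi\,d_*\omega$ each descend to $O(n)$-invariant probability measures on $\Lambda$; this is a direct consequence of the Haar-measure decompositions associated with the homogeneous-space presentations $\vnm=O(n)/O(n-m)$ and $V_{n,n-k}=O(n)/O(k)$, and is in the same spirit as the bi-Stiefel decomposition recalled in Lemma \ref{l2.1}. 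Once that verification is in hand, the uniqueness of the invariant probability measure on the compact homogeneous space $\Lambda$ does the remaining work.
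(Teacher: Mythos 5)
Your proof is correct, but it is organized differently from the paper's. The paper establishes (\ref{4.3}) by lifting both iterated integrals to the rotation group: the left-hand side is rewritten as a double Haar integral over $SO(n-m)\times SO(n)$, a change of variables in the $SO(n)$-integral absorbs the $SO(n-m)$-factor, and both sides collapse to the common value $\intl_{SO(n)} f(\b v_0)\,\vp(\b\tilde\xi_0)\,d\b$ for fixed base frames $v_0$ and $\tilde\xi_0$ satisfying $\tilde\xi_0'v_0=0$. You instead work on the incidence variety $\Lambda$ and invoke the uniqueness of the invariant probability measure on a compact homogeneous $O(n)$-space. The two arguments are two presentations of the same mechanism: the orbit map $\b\mapsto(\b v_0,\b\tilde\xi_0)$ pushes Haar measure on $SO(n)$ forward to your $d\mu$, so the paper's common value is precisely $\int_\Lambda f(v)\vp(\xi)\,d\mu$. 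Your route is more conceptual and shorter once the descent claims are granted; its cost is exactly the verification you flag, namely that $d_*v\,d_*u$ and $d_*\xi\,d_*\om$ push forward to $O(n)$-invariant probability measures on $\Lambda$ and that the construction is insensitive to the sections $v\mapsto\gam_v$ and $\xi\mapsto g_\xi$ (two sections differ by a right factor in the stabilizer of the base frame, under which $d_*u$, resp.\ $d_*\om$, is invariant). That verification is sound as you sketch it, and it is in substance the same computation the paper performs explicitly inside the Haar integral; note also that transitivity of the diagonal action on $\Lambda$ uses $m+(n-k)\le n$, i.e.\ $m\le k$, which holds by hypothesis. Finally, for the stated integrability proviso one should first run the identity for $|f|$ and $|\vp|$ by Tonelli and only then apply Fubini to $f$ and $\vp$; you address this point, so nothing essential is missing.
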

\begin{proof}
By (\ref{dual-rad}), the left-hand side of (\ref{4.3})  equals
\bea\nonumber
l.h.s.&= &\intl_{\vnm} f(v)  d_*v \intl_{V_{n-m,n-k}} \vp \left ( \gam_v
 \left[\begin{array} {c} u \\ 0   \end{array} \right]\right) \,
 d_*u\\\nonumber
 &= & \intl_{SO(n-m)}   d\gam \intl_{SO(n)}  f(\b v_0) \vp \left (\b \gam_{v_0}
 \left[\begin{array} {cc} \gam & 0 \\ 0 & I_{m}   \end{array} \right] \tilde \xi_0 \right) \,
 d \b,
\eea
where $\tilde \xi_0=\left[\begin{array} {c} I_{n-k} \\ 0   \end{array} \right]\in V_{n,n-k}$.  The change of variables
$
\b \gam_{v_0}
 \left[\begin{array} {cc} \gam & 0 \\ 0 & I_{m}   \end{array} \right]\to \b
$ yields
\bea\nonumber
l.h.s.&=&\intl_{SO(n-m)} \,
 d\gam \intl_{SO(n)} \vp (\b
  \tilde \xi_0 ) \,  f\left (\b \left[\begin{array} {cc} \gam' & 0 \\ 0 & I_{m}   \end{array} \right] \gam'_{v_0} v_0\right)  d\b \\
 &=& \nonumber \intl_{SO(n)} \vp (\b
  \tilde \xi_0 ) \,  f\left (\b v_0\right)  d\b.
 \eea
 Similarly, according to (\ref{rad}), the right-hand side of (\ref{4.3}) is  evaluated as follows
 \bea\nonumber
r.h.s.&= &\intl_{V_{n,n-k}} \vp(\xi)  d_*\xi\intl_{V_{k,m}} f \left ( g_\xi
 \left[\begin{array} {c} \om \\ 0   \end{array} \right]\right) \,
 d_*\om\\\nonumber
 &= & \intl_{SO(n)} \vp (\b
  \xi_0 ) \,  f\left (\b \tilde  v_0\right)  d\b=\intl_{SO(n)} \vp (\b
  \tilde \xi_0 ) \,  f\left (\b v_0\right)  d\b,
  \eea
  where $\tilde v_0=\left[\begin{array} {c} I_{m} \\ 0   \end{array} \right]\in V_{n,m}$.  This proves the duality relation.
\end{proof}

\begin{theorem}
The Radon transform $\R f$ and the dual Radon transform  $\R^* \vp$ are well defined almost everywhere
on $V_{n, n-k}$ and $\vnm$, respectively,   for any integrable functions  $f$
 and $\vp$.
\end{theorem}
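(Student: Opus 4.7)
The plan is to derive both assertions from the duality relation (\ref{4.3}) combined with the fact that the measures $d_*\om$ and $d_*u$ appearing in (\ref{rad}) and (\ref{dual-rad}) are \emph{probability} measures. Once this normalization is recorded, one has the trivial identities $(\R 1)(\xi)\equiv 1$ on $V_{n,n-k}$ and $(\R^* 1)(v)\equiv 1$ on $\vnm$, which provide the essential input.

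First I would treat a nonnegative $f\in L^1(\vnm)$. Substituting $\vp\equiv 1$ in (\ref{4.3}) yields
$$\intl_{V_{n,n-k}}(\R f)(\xi)\,d_*\xi=\intl_{\vnm}f(v)(\R^* 1)(v)\,d_*v=\intl_{\vnm}f(v)\,d_*v<\infty,$$
so $\R f$ is integrable on $V_{n,n-k}$ and in particular finite almost everywhere. For an arbitrary $f\in L^1(\vnm)$ I would then apply this to $|f|$ and invoke the pointwise bound $|(\R f)(\xi)|\leq (\R|f|)(\xi)$, valid at every $\xi$ where the right-hand side is finite. This would show that the defining integral (\ref{rad}) converges absolutely for a.e.\ $\xi\in V_{n,n-k}$ and that $\R f\in L^1(V_{n,n-k})$.

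The statement for $\R^*\vp$ follows by the symmetric argument: taking $f\equiv 1$ in (\ref{4.3}) and replacing $\vp$ by $|\vp|$ gives
$$\intl_{\vnm}(\R^*|\vp|)(v)\,d_*v=\intl_{V_{n,n-k}}|\vp(\xi)|\,d_*\xi<\infty,$$
which forces absolute convergence of the defining integral (\ref{dual-rad}) for $\R^*\vp$ at almost every $v\in\vnm$, and in fact places $\R^*\vp$ in $L^1(\vnm)$.

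I do not anticipate any real obstacle here; the only step worth a line of verification is the identity $\R 1 = \R^* 1 = 1$, which is immediate from the normalization of $d_*\om$ and $d_*u$ inherited from (\ref{2.16}). The duality relation (\ref{4.3}) then does all the work and no Fubini-type measurability subtlety arises, since we are always integrating nonnegative functions before appealing to the bound $|\R f|\leq \R|f|$.
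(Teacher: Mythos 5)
Your proposal is correct and follows essentially the same route as the paper: the author likewise derives $\int_{V_{n,n-k}}(\R f)\,d_*\xi=\int_{\vnm}f\,d_*v$ and its dual counterpart from the duality relation (\ref{4.3}) (with $\vp\equiv 1$, resp.\ $f\equiv 1$, using that $d_*\om$ and $d_*u$ are probability measures) and concludes finiteness almost everywhere. Your write-up merely makes explicit the reduction to $|f|$ and the identities $\R 1=\R^*1=1$, which the paper leaves implicit.
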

\begin{proof} It follows from (\ref{4.3}) that
\bea
\intl_{V_{n, n-k}} (\R f)(\xi) d_*\xi &=&\intl_{\vnm} f(v)  d_*v,\\
\intl_{\vnm} (\R^*\vp)(v) d_*v &=&
\intl_{V_{n, n-k}}  \vp(\xi) d_*\xi.
 \eea
 Hence, the integrals  $(\R f)(\xi)$, $(\R^*\vp)(v)$ exist  for almost all $\xi$ and $v$, respectively.

\end{proof}

\subsection {Radon transform and its dual of invariant functions}

The notion of $\ell$-zonal function  on the Stiefel manifold  was introduced in \cite{GR}.
\begin{definition}\label{l-zon}
 Let $1\leq \ell\leq n-1$, $\rho \in O(n-\ell), \; g_\rho= \left[\begin{array} {ccl} \rho
 & 0  \\ 0 & I_\ell \end{array} \right] \in O(n).$ A function
 $f(v)$ on $\vnm \; $  is called $\ell$-{\it
 zonal} if  $f(g_\rho v)=f(v) \; $  for
 all $\rho \in O(n-\ell)$.
\end{definition}
\begin{lemma}   \cite[p. 798]{GR}  Let $m+\ell \le n$,  $ m \ge \ell$. A function $f(v)$
on $\vnm$ is
 $\ell$-zonal and $O(m)$ right-invariant (simultaneously)
 if and only if there is a function $f_0$ on $\pl$ such that
 $f(v) \stackrel{\rm a.e.}{=}f_0(r), \; r= \sl=\sig'_\ell \Pr_{v
 } \sig_\ell$, $\sig_\ell =
 \left[\begin{array} {c} 0 \\ I_\ell \end{array} \right] \in V_{n,
 \ell}$. Here, $\Pr_v\sig_\ell$ is the
orthogonal projection  of $\sig_\ell$ onto  $v$.
\end{lemma}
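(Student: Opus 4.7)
The proof splits into the two implications. Sufficiency (existence of $f_0$ implies both invariances) is an immediate verification: right-multiplication by $h\in O(m)$ leaves $vv'$ invariant, and for $\rho\in O(n-\ell)$ the key identity is $g_\rho'\sigma_\ell=\sigma_\ell$ (since $g_\rho=\left[\begin{array}{cc}\rho & 0 \\ 0 & I_\ell\end{array}\right]$ acts as the identity on the bottom $\ell$ rows), so $\sigma'_\ell g_\rho vv'g_\rho'\sigma_\ell=\sigma'_\ell vv'\sigma_\ell$.

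For necessity, the plan is to prove the stronger orbit statement that the joint action $(\rho,h)\cdot v=g_\rho vh$ of $O(n-\ell)\times O(m)$ is transitive on the generic level sets of $v\mapsto r=\sigma'_\ell vv'\sigma_\ell$. This immediately yields a well-defined $f_0(r):=f(v)$ for $v$ in a generic fiber, and hence $f(v)\stackrel{\rm a.e.}{=}f_0(r)$. To prove the orbit claim, I would decompose $v=\left[\begin{array}{c} a \\ b \end{array}\right]$ with $a\in\frM_{n-\ell,m}$, $b\in\frM_{\ell,m}$, observing that the Stiefel condition becomes $a'a+b'b=I_m$ and $r=bb'$. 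Given two points $v_1,v_2$ in the same fiber (so $b_1b_1'=b_2b_2'=r$), the strategy has two steps: first produce $h\in O(m)$ with $b_2=b_1h$, then (after replacing $v_2$ by $v_2h^{-1}$, which forces $a_2'a_2=I_m-b_1'b_1=a_1'a_1$) produce $\rho\in O(n-\ell)$ with $\rho a_1=a_2$.

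The key ingredient in each step is a transitivity result on the generic stratum where $r>0$. For the first, the polar decomposition $b_i=r^{1/2}u_i$ with $u_iu_i'=I_\ell$ reduces the task to transitivity of $O(m)$ acting on orthonormal $\ell$-frames in $\bbr^m$, which holds since $m\geq\ell$. For the second, the polar decomposition $a_i=\alpha_i(a_i'a_i)^{1/2}$ with $\alpha_i\in V_{n-\ell,m}$ reduces to transitivity of $O(n-\ell)$ on $V_{n-\ell,m}$, which holds since $n-\ell\geq m$ by the hypothesis $m+\ell\leq n$. The main obstacle I anticipate is handling rank-degenerate configurations, where the polar decompositions break down; since such configurations form a measure-zero set in $V_{n,m}$, restricting to the dense open generic stratum is consistent with the a.e.\ statement of the lemma, and $f_0$ may be extended arbitrarily to the closure $\cpl$.
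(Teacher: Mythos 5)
Your proof is correct; note that the paper offers no proof of this lemma (it is quoted from \cite[p.~798]{GR}), and your argument --- sufficiency from $g_\rho'\sig_\ell=\sig_\ell$ and $(vh)(vh)'=vv'$, necessity from transitivity of $O(n-\ell)\times O(m)$ on the generic fibers of $v\mapsto r=bb'$ via polar decomposition of the two blocks --- is essentially the standard one from Grinberg--Rubin. The hypotheses $m\ge\ell$ and $m+\ell\le n$ enter exactly where you invoke them (transitivity of $O(m)$ on $\ell$-frames in $\bbr^m$, resp.\ of $O(n-\ell)$ on $V_{n-\ell,m}$), and the degenerate set you exclude is the zero locus of the non-identically-vanishing real-analytic functions $\det(bb')$ and $\det(I_m-b'b)$ on the connected manifold $\vnm$, hence null, so the a.e.\ conclusion is justified.
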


The following theorem establishes connection between the Radon transform of a function $f$ of the form $f(v)\equiv f_0(r)$, $r=\sl$, and the Erd\'{e}lyi-Kober type fractional integrals associated to the cone $\pl$. Note that for $m\ge  \ell$, the function $f_0$ is defined on  $\pl$, and the integral $J_+^{(k-m)/2,\, m/2} f_0$ exists in the usual sense, cf. (\ref{EKp}). For $m<\ell$, $f_0$ is a function on the boundary $\partial \pl$, and $J_+^{(k-m)/2,\, m/2} f_0$ is understood  in the sense of analytic continuation (\ref{2.22n}).

\begin{theorem}
 Let $f(v)$ be an integrable function on $\vnm$. Suppose that
$f(v)$ has the form $f(v) \equiv f_0(r)$, where  $r=\sl\in\overline\pl$, and
denote  $s=I_\ell- \slx=\sig'_\ell \Pr_{\xi^\perp} \sig_\ell$. If $
1 \le \ell \le k -m$, then \be\label{R-zon} (\R f)(\xi)=
\Gam_\ell (k/2) (J_+^{(k-m)/2,\, m/2} f_0)(s),
 \ee
 where $J_+^{(k-m)/2,\,m/2} $ is the the Erd\'{e}lyi-Kober type operator.
\end{theorem}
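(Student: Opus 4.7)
I would evaluate $(\R f)(\xi)$ directly by unfolding the integral via the bi-Stiefel decomposition of Lemma~\ref{l2.1} and matching the result against the second (``$\b = m/2$'') branch of Definition~\ref{EK+}. This single computation will treat both regimes $m\ge\ell$ and $m<\ell$ uniformly, because $f_0(\om'\om)$ is meaningful on $\cpl$ regardless of whether $\om'\om$ has full rank, so there is no need to split cases as in Grinberg--Rubin.

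\textbf{Setup and reduction to a canonical $y$.} Using parametrization (\ref{rad}), write $g_\xi=[A,\,\xi]$ with $A\in V_{n,k}$ the first $k$ columns of $g_\xi$; then $g_\xi\left[\begin{array}{c}\om\\0\end{array}\right]=A\om$. Setting $y=\sig'_\ell A \in \frM_{\ell,k}$ and using $AA'+\xi\xi'=I_n$, one obtains $yy'=\sig'_\ell(I_n-\xi\xi')\sig_\ell=s$, so the integrand becomes $f_0(y\om\om'y')$. The $O(k)$-invariance of $d_*\om$ on $V_{k,m}$ lets me replace $y$ by $yh$ for any $h\in O(k)$ (by changing variables $\om\to h^{-1}\om$); since any two $y$'s satisfying $yy'=s$ differ by such an $h$, I may assume $y=s^{1/2}[I_\ell,\,0]$, so that $y\om$ equals $s^{1/2}$ times the first $\ell$ rows of $\om$.

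\textbf{Bi-Stiefel unfolding.} Apply Lemma~\ref{l2.1} with parameters $(n,k,m)\to(k,\ell,m)$ (valid since $\ell+m\le k$): write $\om = \left[\begin{array}{c}a\\u(I_m-a'a)^{1/2}\end{array}\right]$, $a\in\frM_{\ell,m}$, $u\in V_{k-\ell,m}$, with $d\om=|I_m-a'a|^{\del}da\,du$ and $\del=(k-\ell-m-1)/2$. The $u$-integral contributes $\sigma_{k-\ell,m}/\sigma_{k,m}=\pi^{-\ell m/2}\Gam_m(k/2)/\Gam_m((k-\ell)/2)$, and the integrand reduces to $f_0(s^{1/2}aa's^{1/2})\,|I_m-a'a|^{\del}$. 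Now substitute $b=s^{1/2}a\in\frM_{\ell,m}$ (Jacobian $|s|^{m/2}$), so that $s^{1/2}aa's^{1/2}=bb'$, and invoke the Schur-complement identities
\[
|I_m-b's^{-1}b|=|s|^{-1}|s-bb'|,\qquad bb'<s\iff b's^{-1}b<I_m.
\]
Relabeling $\om:=b'\in\frM_{m,\ell}$ (so $bb'=\om'\om$), the integral takes the form $|s|^{-m/2-\del}\intl_{\{\om\in\frM_{m,\ell}:\,\om'\om<s\}}|s-\om'\om|^{\del}f_0(\om'\om)\,d\om$.

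\textbf{Matching constants and main obstacle.} Set $\a=(k-m)/2$, $\b=m/2$; a quick bookkeeping check yields $\del=\a-d$ and $-m/2-\del=d-\a-\b$. The Siegel gamma identity (\ref{2.5.2}) gives $\Gam_m(k/2)/\Gam_m((k-\ell)/2)=\Gam_\ell(k/2)/\Gam_\ell((k-m)/2)$, so the total prefactor equals $\Gam_\ell(k/2)\cdot\pi^{-\ell m/2}|s|^{d-\a-\b}/\Gam_\ell(\a)$; by the second branch of Definition~\ref{EK+}, the expression is exactly $\Gam_\ell(k/2)(J_+^{(k-m)/2,\,m/2}f_0)(s)$. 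The main technical obstacle is the change of variables $b=s^{1/2}a$, specifically the rectangular Schur-complement identity for determinants and the corresponding positivity equivalence, together with the reconciliation of $\Gam_\ell$ with $\Gam_m$ via (\ref{2.5.2})---it is precisely this gamma reconciliation that unifies the two disparate-looking Grinberg--Rubin formulas (\ref{R-GG}) and (\ref{R-GG1}) into a single statement (\ref{R-zon}). Convergence for integrable $f$ is a routine consequence of Theorem~\ref{th-ex} after this reduction.
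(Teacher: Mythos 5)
Your proposal is correct and follows essentially the same route as the paper: reduce the integrand to $f_0(s^{1/2}\,\cdot\,s^{1/2})$ using $yy'=s$ and $O(k)$-invariance, unfold over $V_{k,m}$ via the bi-Stiefel decomposition of Lemma \ref{l2.1}, change variables by $s^{1/2}$ with the Schur-complement determinant identity, and reconcile $\Gam_m$ with $\Gam_\ell$ via (\ref{2.5.2}) to identify the result with the $\b=m/2$ form of $J_+^{(k-m)/2,\,m/2}$. The only differences are cosmetic (order of the transposition and the $s^{1/2}$ substitution), so nothing further is needed.
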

\begin{proof}
By (\ref{rad}), \be(\R f)(\xi)=\intl_{V_{k,m}} f_0 (y_\om y'_\om)\,
 d_*\om, \qquad y_\om=\sig'_\ell g_\xi \left[ \begin{array} {c} \om \\ 0   \end{array}
\right]. \ee  Denote \[ a=g^{-1}_\xi \sig_\ell=\left[ \begin{array} {c} a_1 \\
 a_2
 \end{array} \right], \qquad a_1 \in \frM_{k,\ell}, \quad  a_2 \in
 \frM_{n-k,\ell},
\]
so that  $y_\om=a'_1 \om$. By  Lemma 2.1, $a_1=us^{1/2}$, where
$u\in V_{k,\ell}$ and  $$s=
a'_1 a_1=\sig'_\ell g_\xi \left[\begin{array} {cc} I_{k} & 0 \\
0
 & 0 \end{array} \right] g_\xi^{-1} \sig_\ell = \sig'_\ell \Pr_{\xi^\perp}
 \sig_\ell.
 $$  Hence,
 $$(\R f)(\xi)=\intl_{V_{k,m}} f_0 ( s^{1/2}u'\om\om' u s^{1/2})\,
 d_*\om=\intl_{V_{k,m}} f_0 ( s^{1/2}u_0'\om\om' u_0 s^{1/2})\,
 d_*\om, $$ where
$u_0 =
 \left[\begin{array} {c} I_\ell \\ 0 \end{array} \right] \in V_{k,
 \ell}$.
  Applying  the bi-Stiefel decomposition
$$
  w= \left[\begin{array} {cc} b \\ v_1(I_m -b'b)^{1/2}
 \end{array} \right], \qquad b\in \frM_{\ell, m}, \quad v_1 \in
 V_{k-\ell,m},
 $$
according to (\ref{2.10}),  we obtain
\bea   \nonumber
 (\R f)(\xi)
 &=&\frac{ \sig_{k-\ell,m}}{\sig_{k,m}}\!\!\!\intl_{\{b\in \frM_{\ell, m}:\;0<b'b<I_m\}}\!\!\!\!\!\!\!\!\!\!\!\!
 |I_m -b'b|^{(k-m-\ell-1)/2} f_0 ( s^{1/2}bb' s^{1/2})
 db\\\nonumber &=&\frac{ \sig_{k-\ell,m}}{\sig_{k,m}}\!\!\!\!\intl_{\{y\in \frM_{ m,\ell}:\;0<y'y<I_\ell\}}\!\!\!\!\!\!\!\!\!\!\!\!
 |I_\ell -y'y|^{(k-m-\ell-1)/2} f_0 ( s^{1/2}y'y s^{1/2})
   dy \eea
The change of variables $z=y s^{1/2}$, $dz=|s|^{m/2} dy$,  yields \bea   \nonumber
 (\R f)(\xi)
 &=&\frac{ \sig_{k-\ell,m} |s|^{-m/2}}{\sig_{k,m}}\intl_{\{z\in \frM_{ m,\ell}:\;0<z'z<s\}}\!\!\!\!\!\!\!\!\!\!\!\!
 |I_\ell -s^{-1/2}z'zs^{-1/2}|^{(k-m-\ell-1)/2} f_0 ( z'z)
 dz\\\nonumber &=&\frac{ \sig_{k-\ell,m}|s|^{(\ell-k+1)/2}}{\sig_{k,m}}\intl_{\{z\in \frM_{ m,\ell}:\;0<z'z<s\}}\!\!\!\!\!\!\!\!\!\!\!\!
 |s -z'z|^{(k-m-\ell-1)/2} f_0 ( z'z)
   dz .\eea
Owing to (\ref{2.22n}), this gives
$$
      (\R f)(\xi)=  c
\;   (J_+^{\frac{k-m}{2},\, \frac{m}{2}} f_0)(s),
      $$  where
      $$ c=\frac{ \pi^{\ell m/2}\sig_{k-\ell,m}\Gam_{\ell}(\frac{k-m}{2})}{\sig_{k,m}}
      =\frac{ \Gam_{\ell}(\frac{k-m}{2})\Gam_m (\frac{k}{2})}{\Gam_m (\frac{k-\ell}{2})}\, .
      $$
By (\ref{2.5.2}),
\be\label{gam-new}
 \frac{\Gam_m(\frac{k-\ell}{2})}{\Gam_m (\frac{k}{2})}=  \frac{\Gam_\ell(\frac{k-m}{2})}{\Gam_\ell
 (\frac{k}{2})},
\ee
which yields   $c=\Gam_\ell (k/2)$.

\end{proof}

\begin{remark}\label{rem-gr}
It follows from (\ref{J+gr}) and (\ref{gam-new}) that for $m=1,2,\dots \ell-1$, formula (\ref{R-zon}) coincides with (\ref{R-GG1} ).
\end{remark}

Let us consider  the dual Radon transform.
\begin{theorem}\label{dr-ek}

 Let $\vp(\xi)$ be an integrable function on $V_{n,n-k}$. Suppose that
$\vp(\xi)$ has the form $\vp(\xi) \equiv \vp_0(s)$, where
$s=\slx\in\overline\pl$, and denote  $r=I_\ell- \sl=\sig'_\ell \Pr_{v^\perp}
\sig_\ell$ . If  $ 1 \le \ell \le \min\{k -m, n-m\}$, then
\be\label{dR-zon} (\R^* \vp)(v)= \Gam_\ell ((n-m)/2)
 (J_+^{\frac{k-m}{2},\, \frac{n-k}{2}}
\vp_0)(r).
 \ee

\end{theorem}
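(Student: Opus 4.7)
The plan is to mirror the proof of the direct Radon transform case, with the roles of $v$ and $\xi$ interchanged and the dimensions adjusted accordingly. Starting from (\ref{dual-rad}) together with the parametrization $\xi = \gam_v \bigl[\begin{smallmatrix} u \\ 0 \end{smallmatrix}\bigr]$, $u \in V_{n-m,n-k}$, I would substitute $\vp(\xi)=\vp_0(\slx)$ and expand
\[
\slx = b_1' u u' b_1,
\]
where $b=\gam_v'\sig_\ell=\bigl[\begin{smallmatrix} b_1 \\ b_2 \end{smallmatrix}\bigr]$ with $b_1\in\frM_{n-m,\ell}$ and $b_2\in\frM_{m,\ell}$. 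Since $\gam_v v_0=v$ gives $\gam_v\,\diag(I_{n-m},0)\,\gam_v'=I_n-vv'=\Pr_{v^\perp}$, we obtain $b_1'b_1=\sig'_\ell\Pr_{v^\perp}\sig_\ell=r$, and Lemma \ref{l2.3} yields the polar decomposition $b_1=w\,r^{1/2}$ with $w\in V_{n-m,\ell}$, so that $\slx=r^{1/2}w'uu'w\,r^{1/2}$.

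Next, I would use the left $SO(n-m)$-invariance of $d_\ast u$ to replace $w$ by the canonical frame $u_0=\bigl[\begin{smallmatrix} I_\ell \\ 0 \end{smallmatrix}\bigr]\in V_{n-m,\ell}$ (this requires $\ell\leq n-m$). Then I would apply Lemma \ref{l2.1} to $u\in V_{n-m,n-k}$, with the parameters $(n,m,k)$ there replaced by $(n-m,n-k,\ell)$; this requires $\ell\leq k-m$, the other half of the hypothesis. Writing $u=\bigl[\begin{smallmatrix} a \\ w'(I_{n-k}-a'a)^{1/2} \end{smallmatrix}\bigr]$ with $a\in\frM_{\ell,n-k}$ and $w'\in V_{n-m-\ell,n-k}$, one has $u_0'u=a$, and integrating out the $w'$-variable produces
\[
(\R^*\vp)(v)=\frac{\sig_{n-m-\ell,n-k}}{\sig_{n-m,n-k}}\intl_{0<a'a<I_{n-k}}|I_{n-k}-a'a|^{(k-m-\ell-1)/2}\,\vp_0(r^{1/2}aa'r^{1/2})\,da.
\]

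The third step is to set $\omega=a'\in\frM_{n-k,\ell}$, apply Sylvester's identity $|I_{n-k}-a'a|=|I_\ell-\omega'\omega|$, and then perform the change of variables $z=\omega\,r^{1/2}$ with $dz=|r|^{(n-k)/2}d\omega$, in direct analogy with the step $z=ys^{1/2}$ of the preceding theorem. The resulting integral matches the right-hand side of (\ref{2.22n}) exactly, with $\a=(k-m)/2$ and $\b=(n-k)/2$, up to an overall constant.

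The main obstacle is the bookkeeping of that constant: one must verify it equals $\Gam_\ell((n-m)/2)$. After using (\ref{2.16}) to expand the Stiefel volumes, this reduces to the gamma-function identity
\[
\frac{\Gam_\ell((n-m)/2)}{\Gam_\ell((k-m)/2)}=\frac{\Gam_{n-k}((n-m)/2)}{\Gam_{n-k}((n-m-\ell)/2)},
\]
which follows directly from (\ref{2.5.2}) with $\a=(k-m)/2$ and with the parameter ``$k$'' of (\ref{2.5.2}) taken equal to $n-k$. This identity is the analogue of (\ref{gam-new}) invoked in the previous theorem.
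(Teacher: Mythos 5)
Your proposal is correct and follows essentially the same route as the paper's own proof: the same reduction via $\gam_v$, the polar decomposition $b_1=wr^{1/2}$, the bi-Stiefel decomposition of $u\in V_{n-m,n-k}$ with parameters $(n-m,n-k,\ell)$, the substitution $z=\omega r^{1/2}$, identification with (\ref{2.22n}), and the same instance of (\ref{2.5.2}) for the constant $\Gam_\ell((n-m)/2)$. The only differences are cosmetic (explicit mention of Sylvester's identity and of where each half of the hypothesis $\ell\le\min\{k-m,\,n-m\}$ is used).
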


\begin{proof}
By  (\ref{dual-rad}),

\be (\R^* \vp)(v)=\intl_{V_{n-m,n-k}} \vp_0(z_u z'_u) \,
 d_*u, \qquad z_u=\sig'_\ell \gam_v \left[ \begin{array} {c} u \\ 0   \end{array}
\right].\ee
Denote \[ a=\gam^{-1}_v \sig_\ell=\left[ \begin{array} {c} a_1 \\
 a_2
 \end{array} \right], \qquad a_1 \in \frM_{n-m,\ell}, \quad  a_2 \in
 \frM_{m,\ell},
\]
so that  $z_u=a'_1 u$. By  Lemma 2.1, $a_1=wr^{1/2}$, $w\in V_{n-m,\ell}$, and

       \[ r=
      a'_1 a_1=\sig'_\ell \gam_v \left[\begin{array} {cc} I_{n-m} & 0 \\
      0
       & 0 \end{array} \right] \gam_v^{-1} \sig_\ell = \sig'_\ell \Pr_{v^\perp}
       \sig_\ell .
       \]
 Therefore,
\bea    \nonumber
(\R^*
 \vp)(v)&=&\intl_{V_{n-m,n-k}} \vp_0(a'_1uu'a_1) \,
 d_*u\\\nonumber&=&
 \intl_{V_{n-m,n-k}}
 \vp_0(r^{1/2}w'uu'wr^{1/2}) d_*u\\\nonumber&=&\intl_{V_{n-m,n-k}}
                                      \vp_0(r^{1/2}w_0'uu'w_0r^{1/2}) d_*u , \quad  w_0 =
\left[\begin{array} {c} I_\ell\\ 0 \end{array} \right] \in V_{n-m,
      \ell}.  \eea
 Applying  the bi-Stiefel decomposition
  $$
    u= \left[\begin{array} {cc} b \\ u_1(I_{n-k} -b'b)^{1/2}
   \end{array} \right], \qquad b\in \frM_{\ell,n-k}, \quad u_1 \in
 V_{n-m-\ell, n-k},
   $$
   we obtain

  \bea    \nonumber
(\R^* \vp)(v) &=&c \intl_{\{b\in \frM_{\ell,
n-k}:\;0<b'b<I_{n-k}\}}\!\!\!\!\!\!\!\!\!\!\!\!\!\!
\!\!\!\!\!\!\!|I_{n-k} -b'b|^{(k-m-\ell-1)/2} \vp_0 ( r^{1/2}bb'
r^{1/2}) db
\\\nonumber
&=&c\intl_{\{y\in \frM_{ n-k,\ell}:\;0<y'y<I_{\ell}\}}
\!\!\!\!\!\!\!\!\!\!\!\!\!\!|I_\ell -y'y|^{(k-m-\ell-1)/2} \vp_0 (
r^{1/2}y'y r^{1/2}) dy, \eea where $c= \sig_{n-m-\ell,
n-k}/\sig_{n-m,n-k}.$ The change of variables $z=y r^{1/2}$, $dz=|r|^{(n-k)/2} dy$, gives
\bea    \nonumber (\R^* \vp)(v) &=&c |r|^{(k-n)/2}\intl_{\{z\in
\frM_{ n-k,\ell}:\;0<z'z<r\}} \!\!\!\!\!\!\!\!\!\!\!\!\!\!|I_\ell
-r^{-1/2}z'zr^{-1/2}|^{(k-m-\ell-1)/2} \vp_0 ( z'z ) dz
\\\nonumber
&=&c |r|^{(m-n+\ell+1)/2}\intl_{\{z\in \frM_{ n-k,\ell}:\;0<z'z<r\}}\!\!\!\!\!\!\!\!\!\!\!\!\!\! |r
-z'z|^{(k-m-\ell-1)/2} \vp_0 ( z'z ) dz\, .  \eea According to  (\ref{2.22n}), we obtain
$$
      (\R^* \vp)(v)=  c_1
\;   (J_+^{\frac{k-m}{2},\, \frac{n-k}{2}}
\vp_0)(r),
      $$  where
      $$ c_1=\frac{ \pi^{\ell(k-n) /2}\sig_{n-m-\ell,n-k}\Gam_{\ell}(\frac{k-m}{2})}{\sig_{n-m,n-k}}
      =\frac{ \Gam_{\ell}(\frac{k-m}{2})\Gam_{n-k} (\frac{n-m}{2})}{\Gam_{n-k} (\frac{n-m-\ell}{2})}.
      $$
By (\ref{2.5.2}),
$$
 \frac{\Gam_{n-k}(\frac{n-m-\ell}{2})}{\Gam_{n-k} (\frac{n-m}{2})}=  \frac{\Gam_\ell(\frac{k-m}{2})}{\Gam_\ell
 (\frac{n-m}{2})},
$$
and therefore, $c=\Gam_\ell ((n-m)/2))$.

\end{proof}

\bibliographystyle{amsalpha}

\end{document}